\documentclass[12pt,leqno]{article}
\usepackage[T1]{fontenc}
\usepackage{lmodern}
\usepackage{microtype}
\usepackage{amsmath, amsthm, amsfonts, amssymb, color}
\usepackage{mathrsfs}
\usepackage[hidelinks]{hyperref}
\newtheorem{thm}{Theorem}[section]
\newtheorem{cor}[thm]{Corollary}
\newtheorem{lem}[thm]{Lemma}
\newtheorem{prp}[thm]{Proposition}
\newtheorem{exa}[thm]{Example}
\newtheorem{Remark}[thm]{Remark}
\theoremstyle{definition}

\newcommand{\scr}[1]{\mathscr #1}
\definecolor{wco}{rgb}{0.5,0.2,0.3}

\numberwithin{equation}{section}

\newcommand{\ua}{\uparrow}

\title{Monotonicity of Beckner Inequalities
}

\author{Qingbo Lei, Cheng Li, Bo Wu and Jiyang Wu\\
\footnotesize { School  of Mathematical Sciences, Fudan
University, Shanghai 200433, China}\\
{\footnotesize E-mail: 
wubo@fudan.edu.cn
}}

\date{}

\begin{document}

\maketitle

\def\R{\mathbb R} \def\EE{\mathbb E} \def\Z{\mathbb Z} \def\ff{\frac} \def\ss{\sqrt}
\def\H{\mathbb H}
\def\dd{\delta} \def\DD{\Delta} \def\vv{\varepsilon} \def\rr{\rho}
\def\<{\langle} \def\>{\rangle} \def\GG{\Gamma} \def\gg{\gamma}
\def\ll{\lambda} \def\LL{\Lambda} \def\nn{\nabla} \def\pp{\partial}
\def\d{\text{\rm{d}}} \def\loc{\text{\rm{loc}}} \def\bb{\beta} \def\aa{\alpha} \def\D{\scr D}
\def\E{\scr E} \def\si{\sigma} \def\ess{\text{\rm{ess}}}
\def\beg{\begin} \def\beq{\beg}  \def\F{\scr F}
\def\Ric{\text{\rm{Ric}}}
\def\Var{\text{\rm{Var}}}
\def\Osc{\text{\rm{Osc}}}
\def\Ent{\text{\rm{Ent}}}
\def\Hess{\text{\rm{Hess}}}\def\B{\scr B}
\def\e{\text{\rm{e}}} \def\ua{\underline a} \def\OO{\Omega} \def\b{\mathbf b}
\def\oo{\omega}     \def\tt{\tilde} \def\Ric{\text{\rm{Ric}}}
\def\cut{\text{\rm{cut}}} \def\P{\mathbb P} \def\ifn{I_n(f^{\bigotimes n})}
\def\fff{f(x_1)\dots f(x_n)} \def\ifm{I_m(g^{\bigotimes m})} \def\ee{\varepsilon}
\def\C{\scr C}
\def\M{\scr M}\def\ll{\lambda}
\def\X{\scr X}
\def\T{\scr T}
\def\A{\mathbf A}
\def\LL{\scr L}
\def\gap{\mathbf{gap}}
\def\div{\text{\rm div}}
\def\dist{\text{\rm dist}}
\def\cut{\text{\rm cut}}
\def\supp{\text{\rm supp}}
\def\Cov{\text{\rm Cov}}
\def\Dom{\text{\rm Dom}}
\def\Cap{\text{\rm Cap}}
\def\sect{\text{\rm sect}}\def\H{\mathbb H}

\begin{abstract}
In this paper, we prove a monotonicity principle for Beckner inequalities of the form
\[
\frac{\mu(f^2)-\mu(f^p)^{2/p}}{2-p}\leq C\E(f,f),
\qquad f\in\D(\E),\quad 1\le p\le2,
\]
where $\E$ is a conservative symmetric Dirichlet form. If the inequality holds
for every exponent $p$, it holds with the same constant $C$ for all
$q\in[1,p]$. This resolves an open question posed in Chapter~6 of
Wang's monograph~\cite{Wangbook}.

We also study weak Beckner inequalities of the form
\[
\Psi_p(f)\le \beta(r)\E(f,f)+r\,\Osc(f)^2,\qquad f\in\D(\E),\quad r>0,
\]
where $\beta$ is a rate function. We show that the same monotonicity principle
preserves the entire rate function. We further derive consequences for
polynomial rates, semigroup contractivity, and concentration.
\end{abstract}

\noindent\textbf{Keywords:} Beckner inequality; Dirichlet form; weak Beckner
inequality; Poincar\'{e} inequality; logarithmic Sobolev inequality.\vskip 1cm

\section{Introduction}
Beckner inequalities form a one-parameter family that interpolates between the
Poincar\'{e} inequality and the logarithmic Sobolev inequality. Under suitable
Bakry--\'{E}mery curvature conditions, they hold for $1\le p<2$. Introduced in
Beckner's study of sharp inequalities for Gaussian measures, this family has
become a standard tool in the analysis of Markov semigroups; see
\cite{Beckner,BakryEmery,BakryGentilLedoux,Wangbook}. The logarithmic Sobolev
endpoint goes back to Gross~\cite{Gross}. As $p\uparrow2$, the functional
\[\frac{\mu(f^2)-\mu(f^p)^{2/p}}{2-p}\]
converges to one half of the entropy. Thus the endpoint $p=2$ recovers the
logarithmic Sobolev inequality under the normalization adopted below. At
$p=1$, the functional is the variance and yields the Poincar\'{e} inequality.
Related interpolation inequalities and recent developments can be found in
\cite{LatalaOleszkiewicz,BartheCattiauxRoberto,GentilZugmeyer,
AdamczakPolaczykStrzelecki,LamLuRussanov}.

Let $(\Omega,\F,\mu)$ be a probability space. For a non-negative
$f\in L^2(\mu)$ and $1\le p<2$, define
\begin{equation}\label{eq:psi}
  \Psi_p(f)
  :=\frac{\mu(f^2)-\mu(f^p)^{2/p}}{2-p}.
\end{equation}
Then
\[
  \Psi_1(f)=\Var_\mu(f),
\]
while
\[
  \lim_{p\uparrow2}\Psi_p(f)
  =\frac12\Ent_\mu(f^2),
\]
where
\[
  \Ent_\mu(f^2)
  =\mu(f^2\log f^2)-\mu(f^2)\log\mu(f^2).
\]
Throughout the paper, we use the continuous extension
\[
  \Psi_2(f):=\frac12\Ent_\mu(f^2).
\]
Thus, if $\E$ is a non-negative quadratic form, the inequality
\begin{equation}\label{eq:Ip}
  \Psi_p(f)\le C\,\E(f,f)
\end{equation}
interpolates between the Poincar\'{e} inequality and the logarithmic Sobolev
inequality, up to the normalization at $p=2$.

A natural question, raised in connection with the family \eqref{eq:Ip}
and explicitly discussed in \cite{Wangbook}, is whether validity at a larger
exponent implies validity for every smaller one. It is useful here to
distinguish \eqref{eq:Ip} from the modified family
\begin{equation}\label{eq:Iprime}
  (I'_p)\qquad
  p\,\Psi_p(f)
  =
  \frac{p\bigl[\mu(f^2)-\mu(f^p)^{2/p}\bigr]}{2-p}
  \le C\,\E(f,f).
\end{equation}
For the modified functional
\[
  \widetilde\Psi_p(f):=p\,\Psi_p(f),
\]
the map $p\mapsto\widetilde\Psi_p(f)$ is non-decreasing on $[1,2]$;
hence the monotonicity of the corresponding inequalities follows directly from
\cite{Wangbook}. The extra factor $p$ is essential for this argument. For the
unmodified Beckner functional \eqref{eq:psi}, pointwise monotonicity fails,
so a different mechanism is required.

Indeed, let $\gamma$ be the standard Gaussian probability measure on
$\mathbb R$ and let
\[
  f(x)=\Phi(x)
  :=\frac1{\sqrt{2\pi}}\int_{-\infty}^x e^{-t^2/2}\,\d t.
\]
If $X$ is a random variable with the law $\gamma$, then $\Phi(X)$ is uniformly distributed on $(0,1)$, so
\[
  \int_{\mathbb R}f^r\,\d\gamma=\frac1{r+1},
  \qquad r>0.
\]
Consequently,
\[
  \Psi_1(f)=\frac1{12}\approx0.08333,
\]
whereas
\[
  \Psi_{3/2}(f)
  =\frac23-2\left(\frac25\right)^{4/3}
  \approx0.07722.
\]
Hence $\Psi_{3/2}(f)<\Psi_1(f)$. Notice also that
\[
  \int_{\mathbb R}|f'(x)|^2\,\dd\gamma(x)
  =\frac1{2\pi\sqrt3}<\infty,
\]
so the failure already occurs for a smooth function in the natural
Gaussian Sobolev space.

The key observation is that adding a constant does not change the energy of a
conservative Dirichlet form. This suggests replacing \eqref{eq:psi} by the
translation envelope
\begin{equation}\label{eq:Mp}
  M_p(f):=\sup_{c\ge0}\Psi_p(f+c).
\end{equation}
Our main result is the following.

\begin{prp}\label{prop:envelope}
For every bounded non-negative measurable function $f$, the map
\[
  p\longmapsto M_p(f)
\]
is non-decreasing on $[1,2]$, where
$M_2(f)$ is defined by continuous extension.
\end{prp}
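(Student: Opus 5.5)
The plan is a Danskin-type envelope argument in the variable $p$. The goal is to show that the upper Dini derivative of $p\mapsto M_p(f)$ is non-negative on $[1,2)$, and then to pass to $p=2$ by continuity.

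\emph{Step 1: compactify the supremum.} Fix a bounded $f\ge0$ and write $g_c=f+c$. A second-order expansion of $\mu(g_c^p)^{2/p}$ in $1/c$ gives
\[
\mu(g_c^p)^{2/p}=c^2+2c\,\mu(f)+(p-1)\mu(f^2)+(2-p)\mu(f)^2+O(1/c).
\]
Hence $\Psi_p(f+c)\to\Var_\mu(f)$ as $c\to\infty$, uniformly for $p$ in compact subsets of $[1,2)$, and similarly for the entropy at $p=2$. So $c\mapsto\Psi_p(f+c)$ extends continuously to $[0,\infty]$ with value $\Var_\mu(f)$ at $c=\infty$, and $M_p(f)$ is a maximum over this compact set.

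Since $f$ is bounded, $(p,c)\mapsto\Psi_p(f+c)$ is $C^1$ on $[1,2)\times[0,\infty)$ (using $g\ge c>0$ when $c>0$ and dominated convergence). Danskin's theorem then gives
\[
D^+_pM_p(f)\ \ge\ \partial_p\Psi_p(f+c_p)
\]
for some maximiser $c_p\in[0,\infty]$. The endpoint $c_p=\infty$ I would handle separately: there the value is $\Var_\mu(f)$, which is independent of $p$, so the claim reduces to showing that $\partial_p\Psi_p(f+c)$ has non-negative limiting behaviour as $c\to\infty$. I would check this through the next term of the expansion, which involves third moments of $f$.

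\emph{Step 2: reduce to a one-variable inequality in $p$ at critical points.} It therefore suffices to prove
\[
\partial_p\Psi_p(g)\ \ge 0\qquad\text{whenever}\quad g=f+c\ \text{satisfies the first-order condition in } c.
\]
At an interior maximiser, differentiating in $c$ gives the condition
\[
\mu(g)=\mu(g^p)^{\frac{2-p}{p}}\,\mu(g^{p-1}).
\]
At the boundary $c=0$, equality is replaced by the inequality $\mu(g)\le\mu(g^p)^{(2-p)/p}\mu(g^{p-1})$. Normalise by scaling, which multiplies $\Psi_p$ by $\lambda^2$, so that $\mu(g^p)=1$. Then the condition reads $\mu(g)=\mu(g^{p-1})$.

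Next compute
\[
(2-p)^2\,\partial_p\Psi_p(g)=\bigl[\mu(g^2)-1\bigr]-(2-p)\,\partial_p\bigl(\mu(g^p)^{2/p}\bigr),
\]
\[
\partial_p\bigl(\mu(g^p)^{2/p}\bigr)\Big|_{\mu(g^p)=1}=\frac2p\,\mu(g^p\log g).
\]
So the target inequality becomes
\[
\mu(g^2)-1\ \ge\ \frac{2(2-p)}{p}\,\mu(g^p\log g),
\]
under the constraints $\mu(g^p)=1$ and $\mu(g)=\mu(g^{p-1})$ (or $\le$ at the boundary).

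\emph{Step 3 (main obstacle): prove this constrained inequality.} I would first try a pointwise inequality of the form
\[
x^2-x^p-\frac{2(2-p)}{p}\,x^p\log x\ \ge\ a\,(x-x^{p-1})+b\,(x^p-1)
\]
for all $x>0$, with constants $a,b$ chosen to make both sides tangent at $x=1$. Integrating this against $\mu$ and using the two constraints would kill the right-hand side and give the claim. The role of the first-order condition is exactly to supply the extra Lagrange multiplier $a$. That multiplier is what the pointwise-in-$p$ monotonicity of $\Psi_p$ lacks, and the Gaussian counterexample shows that without it the inequality genuinely fails.

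Verifying the scalar inequality means checking the sign of a one-variable function on $(0,\infty)$ for each $p\in[1,2)$. I would do this by examining its behaviour at $0$, at $1$ (where it has a double zero) and at $\infty$, together with convexity of a suitable transform after the substitution $x=e^s$. If a single pair $(a,b)$ does not suffice, the fallback is to keep the sign information at the boundary $c=0$ and to argue with $a\ge0$ there.

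\emph{Step 4: conclude.} Non-negativity of $D^+_pM_p(f)$ on $[1,2)$, together with continuity of $M_p(f)$ (which follows from the uniform control of Step 1), yields that $p\mapsto M_p(f)$ is non-decreasing on $[1,2)$. Continuity at $p=2$ then extends this to $[1,2]$.
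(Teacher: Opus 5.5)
\paragraph{Verdict.} Your architecture matches the paper's. You compactify $c\in[0,\infty]$, take a maximiser, normalise $\mu(X^p)=1$, and use $\mu(X)\le\mu(X^{p-1})$ (with equality when $c_p>0$). You then reduce to $\mu(X^2)-1-\frac{2(2-p)}{p}\mu(X^p\log X)\ge0$ and integrate a pointwise inequality with one multiplier per constraint. However, Step~3 is the whole substance of the proof (Lemma~\ref{lem:algebra} in the paper), and your recipe for it neither determines nor verifies the inequality.

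\paragraph{The multipliers are forced.} Write your candidate as $G(x)=x^2-1-a_px^p\log x-(b+1)(x^p-1)-a(x-x^{p-1})$ with $a_p=\frac{2(2-p)}{p}$. Tangency at $x=1$ is only one linear relation between $a$ and $b$. On that line $G''(1)=0$ holds automatically, and
\[
G'''(1)=(p-1)(2-p)\Bigl(a-\frac{2(p-2)}{p-1}\Bigr).
\]
So every tangent choice but one has a zero of order three at $x=1$ and changes sign there; your expected ``double zero'' never gives a valid inequality. The multipliers must be $a=\eta_p=\frac{2(p-2)}{p-1}$ and $b=\lambda_p-1=\frac{(p+1)(p-2)^2}{p^2(p-1)}$. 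For these, $x=1$ is a zero of order four, and proving $G\ge0$ on $[0,\infty)$ still needs a real argument. The paper writes $G'=\frac{2x^{p-2}}{p-1}H_p$ and gets the sign of $H_p$ from $H_p''=\frac{(p-2)(p-1)}{x}K_s$, where $K_s(x)=1-(1+s)x^s+sx^{s-1}$ and $s=2-p$.

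\paragraph{The fallback has the wrong sign.} There is no room for a fallback pair in any case. At $c_p=0$ you only know $\mu(X)-\mu(X^{p-1})\le0$, so the integrated term $a\bigl(\mu(X)-\mu(X^{p-1})\bigr)$ is non-negative only if $a\le0$, not $a\ge0$. The boundary case works precisely because the forced multiplier $\eta_p$ is negative.

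\paragraph{The case $c_p=\infty$.} Here your plan aims at something false. Pushing the expansion of Lemma~\ref{lem:attainment} one order further gives
\[
\Psi_p(f+c)=\Var_\mu(f)+\frac{p-1}{3(1+c)}\mu\bigl((f-\mu(f))^3\bigr)+O(c^{-2}).
\]
So the next term of $\partial_p\Psi_p(f+c)$ has the sign of the third central moment of $f$. This is negative for $f=\mathbf 1_A$ with $\mu(A)>1/2$. For instance, with $\mu(A)=0.9$ and $p=3/2$, $\Psi_p(f+c)$ stays below $\Var_\mu(f)$ for all finite $c$, so $c_p=\infty$ genuinely occurs with the ``wrong'' sign.

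No derivative is needed in this case. For every $q$ we have $M_q(f)\ge\lim_{c\to\infty}\Psi_q(f+c)=\Var_\mu(f)$, so $c_p=\infty$ gives $M_{p+h}(f)\ge M_p(f)$ immediately. The same trivial bound is what handles $p=1$. There every $c$ satisfies your first-order condition, yet $\partial_p\Psi_p(f+c)$ at $p=1$ can be negative: $f=\mathbf 1_A$, $\mu(A)=\frac12$, $c=0$ gives $\frac14(1-2\log2)$. So the reduction stated in your Step~2 fails at that endpoint.
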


As an immediate application we obtain the monotonicity of Beckner
inequalities.

\begin{thm}\label{thm:dirichlet-intro}
Let $(\E,\D(\E))$ be a conservative symmetric Dirichlet form on
$L^2(\mu)$. If \eqref{eq:Ip} holds for some $q\in[1,2]$ with constant
$C$, then it also holds, with the same constant, for every
$1\le p\le q$.
\end{thm}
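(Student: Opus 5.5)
We derive Theorem~\ref{thm:dirichlet-intro} from Proposition~\ref{prop:envelope}, using that the energy of a conservative form does not see additive constants. The inequality \eqref{eq:Ip} is understood for non-negative $f\in\D(\E)$, which is where $\Psi_p$ is defined. Fix $1\le p\le q$ and a non-negative $f\in\D(\E)$.

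\emph{Step 1: bounded functions.} Assume first that $f$ is also bounded. Conservativeness gives $1\in\D(\E)$ with $\E(1,1)=0$. Hence for every constant $c\ge0$ we have $f+c\in\D(\E)$, and by Cauchy--Schwarz for the non-negative form $\E$,
\[
\E(f+c,f+c)=\E(f,f).
\]
Applying \eqref{eq:Ip} at exponent $q$ to $f+c$ and taking the supremum over $c\ge0$ gives $M_q(f)\le C\,\E(f,f)$. Proposition~\ref{prop:envelope} gives $M_p(f)\le M_q(f)$. Taking $c=0$ in \eqref{eq:Mp} gives $\Psi_p(f)\le M_p(f)$. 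Combining the three,
\[
\Psi_p(f)\le M_p(f)\le M_q(f)\le C\,\E(f,f).
\]
If $q=2$, $\Psi_2$ is the entropy extension, and the same chain works with $M_2$ defined by continuity as in the proposition.

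\emph{Step 2: removing boundedness.} This is where the real work lies. For a general non-negative $f\in\D(\E)$, set $f_n:=f\wedge n$. The Markov (normal contraction) property of the Dirichlet form gives $f_n\in\D(\E)$ with $\E(f_n,f_n)\le\E(f,f)$. Step~1 then yields
\[
\Psi_p(f_n)\le C\,\E(f,f).
\]
It remains to let $n\to\infty$.
\begin{itemize}
\item Since $f_n\uparrow f$ and $f\in L^2(\mu)$, monotone convergence gives $\mu(f_n^2)\to\mu(f^2)$.
\item Since $p\le2$ and $\mu$ is a probability measure, $f\in L^p(\mu)$, so also $\mu(f_n^p)\to\mu(f^p)$.
\item Hence $\Psi_p(f_n)\to\Psi_p(f)$ for $p<2$, which gives \eqref{eq:Ip} at exponent $p$.
\end{itemize}
The endpoint $p=2$ only arises when $p=q=2$, which is trivial. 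If one nevertheless wants the entropy limit, one can use $f_n^2\log f_n^2\ge -e^{-1}$ together with monotone or dominated convergence on the sets $\{f\ge1\}$ and $\{f<1\}$.

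\emph{Step 3: signed functions (if intended).} If the theorem is meant for signed $f$, with $\Psi_p$ evaluated at $|f|$, the same argument applies to $|f|$, using $\E(|f|,|f|)\le\E(f,f)$. The only point needing care is that the constant shift in Step~1 preserves membership in $\D(\E)$; this is exactly what conservativeness ($1\in\D(\E)$, $\E(1,1)=0$) provides.
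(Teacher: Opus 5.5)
Your proposal is correct and follows the paper's proof essentially step for step. For bounded $f$, you use translation invariance $\E(f+c,f+c)=\E(f,f)$ together with Proposition~\ref{prop:envelope} to get $\Psi_p(f)\le M_p(f)\le M_q(f)\le C\,\E(f,f)$; for general $f$, you truncate to $f\wedge n$, apply the normal contraction property, and pass to the limit. Your added remarks (that $p=2$ only occurs when $p=q=2$, and the treatment of signed $f$) are harmless refinements.
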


The proof of Proposition~\ref{prop:envelope} is based on an elementary algebraic
inequality together with the first-order optimality condition for the
translation parameter in \eqref{eq:Mp}.

An analogous result holds for weak Beckner inequalities and is most naturally
stated for an arbitrary rate function. For a bounded non-negative function
$f$, let
\[
  \Osc(f):=\operatorname*{ess\,sup}f-\operatorname*{ess\,inf}f.
\]
The key point is that both the Dirichlet energy and the oscillation are
invariant under addition of constants. Hence the same translation-envelope
argument preserves not only the form of the inequality, but the entire
rate function.

\begin{thm}[Weak Beckner monotonicity with the same rate function]\label{thm:weak-beta-intro}
Let $(\E,\D(\E))$ be a conservative symmetric Dirichlet form on
$L^2(\mu)$, let $p\in[1,2]$, and let
$\beta:(0,\infty)\to[0,\infty)$. Assume that
\begin{equation}\label{eq:weak-beta-intro}
  \Psi_p(f)
  \le
  \beta(r)\,\E(f,f)+r\,\Osc(f)^2,
  \qquad r>0,
\end{equation}
for every bounded non-negative $f\in\D(\E)$. Then, for every $1\le q\le p$,
\begin{equation}\label{eq:weak-beta-intro-q}
  \Psi_q(f)
  \le
  \beta(r)\,\E(f,f)+r\,\Osc(f)^2,
  \qquad r>0,
\end{equation}
with exactly the same rate function $\beta$.
\end{thm}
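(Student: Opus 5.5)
We derive Theorem~\ref{thm:weak-beta-intro} from Proposition~\ref{prop:envelope}, using that the right-hand side of \eqref{eq:weak-beta-intro} does not change when a constant is added to $f$. Fix a bounded non-negative $f\in\D(\E)$, an exponent $1\le q\le p$, and $r>0$. The plan is a chain of three inequalities. First, since $c=0$ is admissible in \eqref{eq:Mp},
\[
\Psi_q(f)\le M_q(f).
\]
Second, Proposition~\ref{prop:envelope} applies because $f$ is bounded and non-negative, and gives $M_q(f)\le M_p(f)$. Third, we bound $M_p(f)$ by applying the hypothesis \eqref{eq:weak-beta-intro} to each translate $f+c$ with $c\ge0$ and then taking the supremum over $c$.

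For the third step we must check that $f+c$ is an admissible test function. It is clearly bounded and non-negative. It lies in $\D(\E)$ because $\E$ is conservative: $1\in\D(\E)$ and $\E(1,1)=0$. Since $\E$ is non-negative and symmetric, Cauchy--Schwarz gives $|\E(f,1)|\le \E(f,f)^{1/2}\E(1,1)^{1/2}=0$. Expanding the quadratic form then yields
\[
\E(f+c,f+c)=\E(f,f)+2c\,\E(f,1)+c^2\E(1,1)=\E(f,f).
\]
In addition, $\Osc(f+c)=\Osc(f)$, since adding $c$ shifts the essential supremum and infimum equally.

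Applying \eqref{eq:weak-beta-intro} to $f+c$ therefore gives, for every $c\ge0$,
\[
\Psi_p(f+c)\le \beta(r)\E(f,f)+r\,\Osc(f)^2 .
\]
The right-hand side does not depend on $c$, so taking the supremum over $c\ge0$ gives $M_p(f)\le\beta(r)\E(f,f)+r\,\Osc(f)^2$. Combining the three steps proves \eqref{eq:weak-beta-intro-q} for each $r>0$, with the same $\beta$.

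Two points need care.
\begin{itemize}
\item The endpoint $p=2$: here $\Psi_2$ is defined by continuous extension, and $M_2$ must be understood as $\sup_{c\ge0}\Psi_2(f+c)$. This matches the convention in Proposition~\ref{prop:envelope}, so that result still applies and the argument is unchanged.
\item The meaning of ``conservative'': we use it in the form $1\in\D(\E)$ with $\E(1,1)=0$. If only the weaker property $T_t1=1$ for the associated semigroup is available, we first establish $1\in\D(\E)$ and $\E(1,1)=0$. Since $\mu$ is a probability measure, $1\in L^2(\mu)$, and
\[
\E(1,1)=\lim_{t\downarrow0}\tfrac1t\,\mu\bigl(1\cdot(1-T_t1)\bigr)=0,
\]
which gives both $1\in\D(\E)$ and $\E(1,1)=0$.
\end{itemize}
The analytic difficulty lies entirely in Proposition~\ref{prop:envelope}, which we take as given. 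What remains here is only this translation-invariance check.
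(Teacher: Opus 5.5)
Your proof is correct and follows the paper's argument: apply the hypothesis to each translate $f+c$, use the translation invariance of $\E$ and $\Osc$ to bound $M_p(f)$ by $\beta(r)\E(f,f)+r\,\Osc(f)^2$, and conclude via $\Psi_q(f)\le M_q(f)\le M_p(f)$ from Proposition~\ref{prop:envelope}. Your extra checks (that $f+c\in\D(\E)$ with $\E(f+c,f+c)=\E(f,f)$, and the endpoint convention at $p=2$) make explicit what the paper leaves implicit.
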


In Section~\ref{sec:weak} we prove Theorem~\ref{thm:weak-beta-intro} and
specialize it to polynomial rates $\beta(r)=Ar^{-\theta}$. This also
explains the coefficient connecting the usual weak formulation with the
power form and yields monotonicity of the corresponding optimal constants.

\section{Weighted Beckner inequalities}
	
	Let $\mu$ be a probability measure on a measurable space $(\Omega,\F)$, and let
	$(\E,D(\E))$ be a Dirichlet form on $L^2(\mu)$. Only the density of
	$D(\E)$ in $L^2(\mu)$ and the Markov contraction property will be used in
	this section; no symmetry or conservativity assumption is needed.
	
	For the necessity part of the characterization below, we impose the following
	mild non-degeneracy condition:
	\[
	\text{\emph{(A)}}\qquad
	\text{there exist }A_n\subset\Omega\text{ such that }
	0<\mu(A_n)<1
	\quad\text{and}\quad
	\mu(A_n)\longrightarrow1.
	\]
	In particular, \emph{(A)} is satisfied whenever $\mu$ is non-atomic.
	The assumption \emph{(A)} is used only in the necessity direction below.
	
	Let $\phi:[1,2)\to(0,\infty)$ be a positive weight and set
	\[
	I_\phi(p;f)
	:=
	\frac{\phi(p)}{2-p}
	\left[
	\mu(f^2)-\mu(f^p)^{2/p}
	\right]
	=\phi(p)\Psi_p(f),
	\qquad 1\le p<2
	\]
	for non-negative $f\in L^2(\mu)$.
	
	\begin{prp}\label{prop:weighted-characterization}
		Under assumption \emph{(A)}, the map
		\[
		p\longmapsto I_\phi(p;f)
		\]
		is non-decreasing on $[1,2)$ for every non-negative
		$f\in D(\E)\cap L^\infty(\mu)$ if and only if
		\[
		p\longmapsto\frac{\phi(p)}{p}
		\]
		is non-decreasing on $[1,2)$.
	\end{prp}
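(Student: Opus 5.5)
The plan is to write $I_\phi(p;f)=\frac{\phi(p)}{p}\cdot p\,\Psi_p(f)$ and use this factorization in both directions. For sufficiency, the modified functional $p\Psi_p$ does the work. For necessity, indicator functions of sets of measure close to $1$ turn $I_\phi(p;\cdot)$ asymptotically into a constant multiple of $\phi(p)/p$.

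\emph{Sufficiency.} Assume $\phi(p)/p$ is non-decreasing and let $f\ge0$ be bounded.
\begin{itemize}
\item By Jensen (or H\"older), $\mu(f^p)^{2/p}\le\mu(f^2)$ for $1\le p<2$, so $\Psi_p(f)\ge0$.
\item As recalled in the introduction from \cite{Wangbook}, $p\mapsto\widetilde\Psi_p(f)=p\Psi_p(f)$ is non-decreasing on $[1,2)$. This holds for all non-negative $f\in L^2(\mu)$, so no property of $\E$ is needed here.
\item Hence $I_\phi(p;f)=\frac{\phi(p)}{p}\,\widetilde\Psi_p(f)$ is a product of two non-negative non-decreasing functions of $p$, and is therefore non-decreasing.
\end{itemize}

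\emph{Necessity.} Assume $I_\phi(\cdot;f)$ is non-decreasing for every non-negative $f\in D(\E)\cap L^\infty(\mu)$. The argument has three steps.

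First, I extend the monotonicity to all measurable $0\le f\le1$.
\begin{itemize}
\item By density of $D(\E)$ in $L^2(\mu)$, pick $g_k\in D(\E)$ with $g_k\to f$ in $L^2$.
\item By the Markov contraction property, $f_k:=(0\vee g_k)\wedge1\in D(\E)\cap L^\infty$, and $|f_k-f|\le|g_k-f|$. So $f_k\to f$ in $L^2$, hence in $L^p$ for every $p\le2$, since everything lives on a probability space with values in $[0,1]$.
\item Since $0\le f_k\le1$, we have $|f_k^p-f^p|\le p|f_k-f|$, so $\mu(f_k^p)\to\mu(f^p)$ and $\mu(f_k^2)\to\mu(f^2)$.
\item Thus $I_\phi(p;f_k)\to I_\phi(p;f)$ for each fixed $p$, and the inequality $I_\phi(p;f_k)\le I_\phi(p';f_k)$ for $p\le p'$ passes to the limit.
\end{itemize}

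Second, I evaluate on indicators. Take $f=1_{A_n}$ from (A) and set $a_n=\mu(A_n)=1-\delta_n$ with $\delta_n\downarrow0$. Then
\[
\Psi_p(1_{A_n})=\frac{a_n-a_n^{2/p}}{2-p}=\frac{\delta_n}{p}+O(\delta_n^2),
\]
because $a_n-a_n^{2/p}=(1-\delta_n)-(1-\delta_n)^{2/p}=\delta_n\bigl(\tfrac2p-1\bigr)+O(\delta_n^2)$. The $O$ constant depends on $p$, which is harmless since $p$ is fixed.

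Third, I pass to the limit. Fix $1\le p<p'<2$. The inequality $\phi(p)\Psi_p(1_{A_n})\le\phi(p')\Psi_{p'}(1_{A_n})$ holds for every $n$; dividing by $\delta_n>0$ and letting $n\to\infty$ gives $\phi(p)/p\le\phi(p')/p'$. This is exactly where $0<\mu(A_n)<1$ and $\mu(A_n)\to1$ are used.

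The only delicate point is the first step of the necessity part: indicators need not belong to $D(\E)$, so the truncation and density argument is what makes them admissible test functions. The remaining steps are a first-order Taylor expansion.
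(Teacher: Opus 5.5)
Your proposal is correct and follows the paper's proof. Sufficiency uses the factorization $I_\phi(p;f)=\frac{\phi(p)}{p}\,p\Psi_p(f)$ together with the monotonicity of the modified functional; necessity uses Markov-truncated $L^2$ approximations of $\mathbf 1_{A_n}$ and the limit $\mu(A_n)\to1$. The differences are only cosmetic: you first extend monotonicity to all measurable $0\le f\le1$ before specializing to indicators, and you use the first-order expansion $\Psi_p(\mathbf 1_{A_n})=\delta_n/p+O(\delta_n^2)$ where the paper applies l'Hospital's rule to the ratio $(u-u^{2/q})/(u-u^{2/p})$.
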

	
	\begin{proof}
		We first prove the sufficiency. As recalled in the Introduction, the
		modified Beckner functional
		\[
		\widetilde{\Psi}_p(f)
		:=p\Psi_p(f)
		=
		\frac{p}{2-p}
		\left(
		\mu(f^2)-\mu(f^p)^{2/p}
		\right)
		\]
		is non-negative and, for every fixed non-negative $f\in L^2(\mu)$,
		non-decreasing in $p\in[1,2)$. Since
		\[
		I_\phi(p;f)
		=
		\frac{\phi(p)}{p}\,\widetilde{\Psi}_p(f),
		\]
		the conclusion follows immediately whenever
		$p\mapsto\phi(p)/p$ is non-decreasing, because both factors are
		non-negative and non-decreasing. Notice that neither \emph{(A)} nor
		the Dirichlet-form structure is needed for this direction.
		
		We now prove the necessity. Fix $1\le p<q<2$. Let $(A_n)$ be a
		sequence satisfying \emph{(A)}, and write
		\[
		u_n:=\mu(A_n).
		\]
		Thus $0<u_n<1$ and $u_n\to1$.
		
		For each fixed $n$, since $D(\E)$ is dense in $L^2(\mu)$, there exists
		a sequence $g_{n,k}\in D(\E)$ such that
		\[
		g_{n,k}\longrightarrow\mathbf 1_{A_n}
		\qquad\text{in }L^2(\mu)
		\]
		as $k\to\infty$. Let
		\[
		T(t):=(0\vee t)\wedge1
		\]
		and define $f_{n,k}:=T(g_{n,k})$. By the Markov property,
		\[
		f_{n,k}\in D(\E),
		\qquad
		0\le f_{n,k}\le1.
		\]
		Moreover, since $T$ is a contraction and
		$T(\mathbf 1_{A_n})=\mathbf 1_{A_n}$,
		\[
		\|f_{n,k}-\mathbf 1_{A_n}\|_{L^2(\mu)}
		\le
		\|g_{n,k}-\mathbf 1_{A_n}\|_{L^2(\mu)}
		\longrightarrow0.
		\]
		
		For every $r\in[1,2]$, the function $t\mapsto t^r$ is Lipschitz on
		$[0,1]$. Since $\mu$ is a probability measure, the preceding $L^2$
		convergence also implies $L^1$ convergence. Hence
		\[
		\mu(f_{n,k}^r)\longrightarrow\mu(A_n)=u_n
		\qquad\text{as }k\to\infty.
		\]
		In particular,
		\[
		\mu(f_{n,k}^2)\to u_n,
		\qquad
		\mu(f_{n,k}^p)\to u_n,
		\qquad
		\mu(f_{n,k}^q)\to u_n.
		\]
		
		By the assumed monotonicity of $I_\phi(\,\cdot\,;f_{n,k})$,
		\[
		\frac{\phi(p)}{2-p}
		\left(
		\mu(f_{n,k}^2)-\mu(f_{n,k}^p)^{2/p}
		\right)
		\le
		\frac{\phi(q)}{2-q}
		\left(
		\mu(f_{n,k}^2)-\mu(f_{n,k}^q)^{2/q}
		\right).
		\]
		Letting $k\to\infty$, we obtain
		\[
		\frac{\phi(p)}{2-p}
		\left(u_n-u_n^{2/p}\right)
		\le
		\frac{\phi(q)}{2-q}
		\left(u_n-u_n^{2/q}\right).
		\]
		Since $0<u_n<1$,
		\[
		u_n-u_n^{2/r}>0,
		\qquad 1\le r<2,
		\]
		and therefore
		\[
		\frac{\phi(p)(2-q)}{\phi(q)(2-p)}
		\le
		\frac{u_n-u_n^{2/q}}{u_n-u_n^{2/p}}.
		\]
		
		Letting $n\to\infty$ and using $u_n\to1$, l'Hospital's rule gives
		\[
		\begin{aligned}
			\lim_{u\to1^-}
			\frac{u-u^{2/q}}{u-u^{2/p}}
			&=
			\lim_{u\to1^-}
			\frac{1-\frac{2}{q}u^{2/q-1}}
			{1-\frac{2}{p}u^{2/p-1}} =
			\frac{1-\frac{2}{q}}{1-\frac{2}{p}}
			=
			\frac{p(2-q)}{q(2-p)}.
		\end{aligned}
		\]
		Consequently,
		\[
		\frac{\phi(p)(2-q)}{\phi(q)(2-p)}
		\le
		\frac{p(2-q)}{q(2-p)}.
		\]
		Cancelling the positive factor $(2-q)/(2-p)$ yields
		\[
		\frac{\phi(p)}{\phi(q)}\le\frac{p}{q},
		\]
		or equivalently,
		\[
		\frac{\phi(p)}{p}\le\frac{\phi(q)}{q}.
		\]
		Since $1\le p<q<2$ are arbitrary,
		$p\mapsto\phi(p)/p$ is non-decreasing on $[1,2)$.
	\end{proof}
	
	\begin{Remark}\label{rem:weighted-criterion}
		Proposition~\ref{prop:weighted-characterization} gives a criterion
		that does not require differentiability of $\phi$. If $\phi$ is
		differentiable, the condition is equivalently expressed as
		\[
		\left(\frac{\phi(p)}{p}\right)'\ge0
		\quad\Longleftrightarrow\quad
		p\phi'(p)-\phi(p)\ge0
		\quad\Longleftrightarrow\quad
		\phi'(p)\ge\frac{\phi(p)}{p}.
		\]
		
		We have stated the weighted characterization only for $p<2$, since no
		behavior of $\phi$ at the endpoint has been assumed. If $\phi$ admits a
		finite positive extension to $p=2$ and $p\mapsto\phi(p)/p$ remains
		non-decreasing on $[1,2]$, then, with the convention
		\[
		I_\phi(2;f)
		:=\phi(2)\Psi_2(f)
		=\frac{\phi(2)}{2}\Ent_\mu(f^2),
		\]
		the same monotonicity extends to the endpoint $p=2$.
	\end{Remark}
	
	\begin{cor}\label{cor:weighted-beckner}
		Suppose that $p\mapsto\phi(p)/p$ is non-decreasing on $[1,2)$.
		Fix $1\le p\le q<2$. If, for some $C\ge0$,
		\[
		I_\phi(q;f)\le C\E(f,f)
		\]
		holds for every non-negative $f\in D(\E)\cap L^\infty(\mu)$, then
		\[
		I_\phi(p;f)\le C\E(f,f)
		\]
		holds for every such $f$, with the same constant $C$.
	\end{cor}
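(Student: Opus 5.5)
The corollary follows directly from the sufficiency half of Proposition~\ref{prop:weighted-characterization}, so my plan is to make that pointwise comparison explicit and then chain it with the assumed inequality at exponent $q$. Fix a non-negative $f\in D(\E)\cap L^\infty(\mu)$. Since $\mu$ is a probability measure, $f\in L^2(\mu)$, so $\Psi_p(f)$ and $\Psi_q(f)$ are finite.

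The first step is to record the monotonicity of the modified functional $\widetilde\Psi_r(f)=r\Psi_r(f)$ in $r\in[1,2)$. This was recalled in the Introduction and used in the proof of Proposition~\ref{prop:weighted-characterization}. Non-negativity of $\Psi_r(f)$ follows from Jensen's (or H\"older's) inequality $\mu(f^r)^{2/r}\le\mu(f^2)$.

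The second step is the factorization
\[
I_\phi(r;f)=\frac{\phi(r)}{r}\,\widetilde\Psi_r(f).
\]
Both factors are non-negative and non-decreasing in $r$, so their product is non-decreasing. For $1\le p\le q<2$ this gives
\[
I_\phi(p;f)\le I_\phi(q;f).
\]
This is exactly the sufficiency direction of Proposition~\ref{prop:weighted-characterization}, which needs neither assumption (A) nor any Dirichlet-form structure.

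The third step combines this with the hypothesis:
\[
I_\phi(p;f)\le I_\phi(q;f)\le C\,\E(f,f).
\]
The constant $C$ is unchanged. The case $p=q$ is trivial.

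There is no real obstacle here. The only point to watch is that the whole argument rests on the monotonicity of $\widetilde\Psi_r$, which the paper takes from \cite{Wangbook} rather than proving. If one wanted a self-contained argument, the hardest step would be proving that monotonicity directly, for instance by differentiating $r\mapsto \frac{r}{2-r}\bigl(\mu(f^2)-\mu(f^r)^{2/r}\bigr)$ and using the convexity of $r\mapsto\log\mu(f^r)$. In the context of the paper, however, it can be cited.
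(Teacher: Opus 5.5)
Your proposal is correct and matches the paper's proof. The paper likewise invokes the sufficiency half of Proposition~\ref{prop:weighted-characterization}: the factorization $I_\phi(r;f)=\frac{\phi(r)}{r}\,\widetilde\Psi_r(f)$ gives $I_\phi(p;f)\le I_\phi(q;f)$, which is then chained with the hypothesis at exponent $q$ to keep the same constant $C$.
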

	
	\begin{proof}
		By the sufficiency argument in Proposition~\ref{prop:weighted-characterization},
		\[
		I_\phi(p;f)\le I_\phi(q;f)\le C\E(f,f).
		\]
	\end{proof}
	
	\begin{Remark}\label{rem:smooth-necessity}
		The necessity argument above is independent of the dimension and of any
		Euclidean structure on $\Omega$. Its only ingredients are the density of
		$D(\E)$ in $L^2(\mu)$ and the Markov contraction property.
		
		Whenever a suitable smooth core with the required approximation property
		is available, the same necessity argument can be implemented with smooth
		test functions. In particular, this is the case for the classical
		Gaussian Dirichlet form on $\mathbb R^d$. Let $\gamma_d$ denote the
		standard Gaussian probability measure and
		\[
		\E(f,f)
		=
		\int_{\mathbb R^d}|\nabla f|^2\,\d\gamma_d,
		\qquad
		D(\E)=H^1(\gamma_d).
		\]
		Choose a sequence of $a_n\in \R$ such that
		\[
		A_n=\{x\in\mathbb R^d:x_1<a_n\},
		\qquad
		\gamma_d(A_n)=u_n\uparrow1,
		\]
		and let $\eta\in C^\infty(\mathbb R)$ satisfy
		\[
		0\le\eta\le1,
		\qquad
		\eta(t)=0\ \text{for }t\le0,
		\qquad
		\eta(t)=1\ \text{for }t\ge1.
		\]
		Then
		\[
		f_{n,k}(x)=\eta\bigl(k(a_n-x_1)\bigr)
		\]
		belongs to $C_b^\infty(\mathbb R^d)\cap H^1(\gamma_d)$ and converges to
		$\mathbf 1_{A_n}$ in $L^2(\gamma_d)$ as $k\to\infty$. Hence the
		necessity conclusion remains valid even if the monotonicity assumption is
		restricted to smooth test functions in the classical Gaussian Dirichlet
		space, in every dimension $d\ge1$.
	\end{Remark}
	
	The characterization also clarifies the transition to the next section. For
	$\phi(p)=p$, the quotient $\phi(p)/p$ is constant, so the modified Beckner
	functional $p\Psi_p(f)$ is pointwise non-decreasing. In contrast, for the
	unmodified Beckner functional $\phi\equiv1$, the quotient $1/p$ is strictly
	decreasing. Thus the unmodified family cannot be handled by pointwise weighted
	monotonicity, which motivates the translation-envelope argument developed in
	the next section.

\section{The translation envelope}

We first record the behavior of $\Psi_p(f+c)$ for large $c$.

\begin{lem}\label{lem:attainment}
Let $f\ge0$ be bounded and let $p\in[1,2]$. Then
$c\mapsto\Psi_p(f+c)$ is continuous on $[0,\infty)$ and
\begin{equation}\label{eq:limit}
  \lim_{c\to\infty}\Psi_p(f+c)=\Var_\mu(f).
\end{equation}
Consequently, after adjoining the point $c=\infty$ and setting
$\Psi_p(f+\infty)=\Var_\mu(f)$, the supremum in \eqref{eq:Mp} is
attained at some $c_p\in[0,\infty]$.
\end{lem}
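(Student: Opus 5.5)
Continuity comes from dominated convergence, and the limit comes from a second-order expansion in $1/c$. Fix $f\ge0$ bounded, say $0\le f\le K$, and $p\in[1,2]$.

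\emph{Continuity.} For $c\ge0$ the function $f+c$ takes values in $[c,c+K]$. Hence $c\mapsto\mu((f+c)^r)$ is continuous for each $r\in[1,2]$, by dominated convergence with the bound $(c_0+1+K)^r$ on a neighbourhood of $c_0$.
\begin{itemize}
\item For $p<2$, the map $c\mapsto\Psi_p(f+c)$ is then a continuous combination of these integrals.
\item For $p=2$, we have $\Psi_2(f+c)=\frac12\Ent_\mu((f+c)^2)$. The map $t\mapsto t^2\log t^2$ is continuous on $[0,\infty)$ once we set $0\log0=0$, and it is bounded on compacts, so the same dominated convergence argument applies. The point $c=0$ needs no special care, since $f\ge0$ and $t^2\log t^2$ is bounded on $[0,K+1]$.
\end{itemize}

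\emph{The limit \eqref{eq:limit}.} We expand for large $c$ using homogeneity. Set $\varepsilon=1/c$ and $g=f$. Then
\[
\Psi_p(f+c)=c^2\,\Psi_p(1+\varepsilon g).
\]
For $p<2$, expand uniformly in the bounded $g$:
\[
(1+\varepsilon g)^p=1+p\varepsilon g+\tfrac{p(p-1)}2\varepsilon^2g^2+O(\varepsilon^3).
\]
This gives
\[
\mu((1+\varepsilon g)^p)^{2/p}=1+2\varepsilon\mu(g)+\varepsilon^2\Bigl[(p-1)\mu(g^2)+(2-p)\mu(g)^2\Bigr]+O(\varepsilon^3).
\]
The second-order coefficient is obtained from $\log(1+x)$ followed by the exponential: it equals $\frac2p\bigl(\frac{p(p-1)}2\mu(g^2)-\frac{p^2}{2}\mu(g)^2\bigr)+2\mu(g)^2$. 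We also have
\[
\mu((1+\varepsilon g)^2)=1+2\varepsilon\mu(g)+\varepsilon^2\mu(g^2).
\]
Subtracting, the difference equals $\varepsilon^2(2-p)\Var_\mu(g)+O(\varepsilon^3)$. Dividing by $2-p$ and multiplying by $c^2=\varepsilon^{-2}$ gives $\Var_\mu(f)+O(1/c)$.

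For $p=2$ we expand the entropy directly. With $h=(1+\varepsilon g)^2$, write $h\log h=2(1+\varepsilon g)^2\log(1+\varepsilon g)$ and expand to second order. The standard computation gives
\[
\Ent_\mu((1+\varepsilon g)^2)=2\varepsilon^2\Var_\mu(g)+O(\varepsilon^3),
\]
and hence $\Psi_2(f+c)\to\Var_\mu(f)$.

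\emph{Main obstacle: uniformity of the remainders.} The only real issue is that the $O(\varepsilon^3)$ terms must be uniform over $\Omega$, so that they can be integrated and then multiplied by $c^2$. This holds because $0\le\varepsilon g\le K/c\to0$, so Taylor's theorem with Lagrange remainder for $t\mapsto(1+t)^p$ and $t\mapsto(1+t)^2\log(1+t)$ on $[0,1/2]$ gives a remainder bounded by a constant times $(K/c)^3$. The constant depends on $p$, but $p$ is fixed here.

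\emph{Attainment.} Define $F(c)=\Psi_p(f+c)$ on $[0,\infty)$ and $F(\infty)=\Var_\mu(f)$. By the two previous steps, $F$ is continuous on the compact space $[0,\infty]$, homeomorphic to $[0,1]$ via $c\mapsto c/(1+c)$. It therefore attains its supremum, which equals $M_p(f)$. This supremum is finite, since $F$ is bounded on that compact set.
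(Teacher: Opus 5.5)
Your proposal is correct and follows essentially the same route as the paper: continuity by dominated convergence, degree-two homogeneity plus a second-order Taylor expansion in a small parameter for the limit, and compactness of $[0,\infty]$ for attainment. The only difference in the expansion is the parametrization: you use $\varepsilon=1/c$ with $g=f$, while the paper uses $z=(1+c)^{-1}$ with $g=f-1$. Your explicit $p=2$ entropy expansion and your check that the $O(\varepsilon^3)$ remainders are uniform fill in details the paper only asserts.
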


\begin{proof}
For finite $c$, continuity follows from dominated convergence. We prove
\eqref{eq:limit} for $1\le p<2$; the endpoint $p=2$ follows by the same
second-order expansion.

Write
\[
  f+c=(1+c)(1+zg),\qquad
  g=f-1,\qquad z=(1+c)^{-1},\quad c\geq0.
\]
Since $\Psi_p$ is homogeneous of degree two,
\[
  \Psi_p(f+c)=z^{-2}\Psi_p(1+zg).
\]
As $z\downarrow0$,
\begin{align*}
  \mu((1+zg)^2)
  &=1+2z\mu(g)+z^2\mu(g^2),\\
  \mu((1+zg)^p)
  &=1+pz\mu(g)+\frac{p(p-1)}2z^2\mu(g^2)+O(z^3),
\end{align*}
and therefore
\[
  \mu((1+zg)^p)^{2/p}
  =
  1+2z\mu(g)
  +(p-1)z^2\mu(g^2)
  +(2-p)z^2\mu(g)^2
  +O(z^3).
\]
Substitution in \eqref{eq:psi} gives
\[
  z^{-2}\Psi_p(1+zg)
  =\mu(g^2)-\mu(g)^2+O(z)
  =\Var_\mu(f)+O(z),
\]
which proves \eqref{eq:limit}. The last assertion follows from continuity
on the compactification $[0,\infty]$.
\end{proof}

\begin{proof}[Proof of Proposition~\ref{prop:envelope}]
For $p=1$,
\[
  \Psi_1(f+c)=\Var_\mu(f)
\]
for every $c\ge0$, and therefore
\[
  M_1(f)=\Var_\mu(f)\le M_q(f)
\]
for every $q>1$, by Lemma~\ref{lem:attainment}.

Fix $p\in(1,2)$ and let $c_p\in[0,\infty]$ be a maximizer given by
Lemma~\ref{lem:attainment}. First suppose $c_p<\infty$. Put
\[
  Y=f+c_p,\qquad
  X=\frac{Y}{\mu(Y^p)^{1/p}},
\]
so that $\mu(X^p)=1$. If $c_p>0$, the first-order optimality condition
gives
\[
  \mu(X)=\mu(X^{p-1}).
\]
If $c_p=0$, the right derivative is non-positive, and hence
\begin{equation}\label{eq:opt}
  \mu(X)-\mu(X^{p-1})\le0.
\end{equation}
Thus \eqref{eq:opt} holds in both cases.

Keeping $Y$ fixed and differentiating $\Psi_s(Y)$ with respect to $s$,
we obtain
\begin{equation}\label{eq:derivative}
\left.\frac{\partial}{\partial s}\Psi_s(Y)\right|_{s=p}
=
\mu(Y^p)^{2/p}
\frac{
  \mu(X^2)-1-\frac{2(2-p)}p\mu(X^p\log X)
}{(2-p)^2}.
\end{equation}
Applying Lemma~\ref{lem:algebra} pointwise with $x=X$, integrating
\eqref{eq:fundamental}, and using $\mu(X^p)=1$, we obtain
\[
  \mu(X^2)-1-\frac{2(2-p)}p\mu(X^p\log X)
  \ge
  \eta_p\bigl(\mu(X)-\mu(X^{p-1})\bigr).
\]
Since $\eta_p<0$, the right-hand side is non-negative by
\eqref{eq:opt}. Hence the derivative in \eqref{eq:derivative} is
non-negative.

If $c_p=\infty$, then the active value equals
$\Var_\mu(f)$ by Lemma~\ref{lem:attainment}, and is independent of $p$.
Finally, for $h>0$,
\[
  M_{p+h}(f)-M_p(f)
  \ge
  \Psi_{p+h}(f+c_p)-\Psi_p(f+c_p).
\]
The preceding calculation shows that the lower right Dini derivative of
$p\mapsto M_p(f)$ is non-negative. Since $M_p(f)$ is continuous in
$p$, it follows that $M_p(f)$ is non-decreasing on $[1,2]$.
\end{proof}

\section{Beckner inequalities for conservative Dirichlet forms}

We use the standard terminology of symmetric Dirichlet forms; see
\cite{FukushimaOshimaTakeda}. Let $(\E,\D(\E))$ be a symmetric
Dirichlet form on $L^2(\mu)$. We call
it conservative if $1\in\D(\E)$ and $\E(1,1)=0$. By the
Cauchy--Schwarz inequality for the form,
\[
  \E(f+c,f+c)=\E(f,f),\qquad c\in\mathbb R.
\]

\begin{proof}[Proof of Theorem~\ref{thm:dirichlet-intro}]
Assume first that $f\in\D(\E)$ is bounded and non-negative, and that
\eqref{eq:Ip} holds at the exponent $q$ with constant $C$. For every
$c\ge0$,
\[
  \Psi_q(f+c)
  \le C\,\E(f+c,f+c)
  =C\,\E(f,f).
\]
Taking the supremum over $c$ gives
\[
  M_q(f)\le C\,\E(f,f).
\]
If $1\le p\le q$, Proposition~\ref{prop:envelope} yields
\[
  \Psi_p(f)\le M_p(f)\le M_q(f)\le C\,\E(f,f),
\]
which is exactly the Beckner inequality at exponent $p$.

For a general non-negative $f\in\D(\E)$, apply the bounded result to
$f_n=f\wedge n$. By the Markov property,
$f_n\in\D(\E)$ and $\E(f_n,f_n)\le\E(f,f)$. Moreover
$f_n\to f$ in $L^2(\mu)$ and in $L^p(\mu)$, so
$\Psi_p(f_n)\to\Psi_p(f)$. Passing to the limit proves the claim.
\end{proof}

At the endpoints, the theorem says that the logarithmic Sobolev
inequality implies all intermediate Beckner inequalities and, in
particular, the Poincar\'{e} inequality, with the constants interpreted
according to the normalization
$\Psi_2(f)=\frac12\Ent_\mu(f^2)$.

\section{Weak Beckner inequality}\label{sec:weak}

Weak Poincar\'{e} inequalities with a rate function are a standard tool for
describing subexponential convergence of Markov semigroups
\cite{RocknerWang,Wangbook}. Closely related weak and interpolated functional
inequalities, including their measure--capacity and concentration aspects, are
developed in \cite{BartheCattiauxRoberto,CattiauxGozlanGuillinRoberto}.
We now turn to the corresponding weak Beckner family.
The natural formulation for the present argument uses the oscillation
\[
  \Osc(f):=\operatorname*{ess\,sup} f-\operatorname*{ess\,inf} f.
\]
For a conservative Dirichlet form,
\begin{equation}\label{eq:weak-translation-invariance}
  \E(f+c,f+c)=\E(f,f),
  \qquad
  \Osc(f+c)=\Osc(f),
  \qquad c\in\mathbb R.
\end{equation}
Thus both quantities on the right-hand side of a weak inequality are
translation invariant.

Let $\beta:(0,\infty)\to[0,\infty)$. 
We say that the weak Beckner
inequality $WB_p(\beta)$ holds if
\begin{equation}\label{eq:weak-beta}
  \Psi_p(f)
  \le
  \beta(r)\,\E(f,f)+r\,\Osc(f)^2,
  \qquad r>0,
\end{equation}
for every bounded non-negative $f\in\D(\E)$. No monotonicity assumption on $\beta$ is
needed for the following result.

\begin{proof}[Proof of Theorem~\ref{thm:weak-beta-intro}]
Fix $r>0$. For every $c\ge0$, \eqref{eq:weak-beta-intro} and
\eqref{eq:weak-translation-invariance} give
\[
  \Psi_p(f+c)
  \le
  \beta(r)\,\E(f,f)+r\,\Osc(f)^2.
\]
Taking the supremum over $c\ge0$ yields
\[
  M_p(f)
  \le
  \beta(r)\,\E(f,f)+r\,\Osc(f)^2.
\]
If $1\le q\le p$, Proposition~\ref{prop:envelope} implies
\[
  \Psi_q(f)\le M_q(f)\le M_p(f).
\]
Therefore
\[
  \Psi_q(f)
  \le
  \beta(r)\,\E(f,f)+r\,\Osc(f)^2.
\]
Since $r>0$ is arbitrary, $WB_q(\beta)$ holds with exactly the same
function $\beta$.
\end{proof}

Theorem~\ref{thm:weak-beta-intro} is stronger than a comparison of
particular numerical constants: the entire prescribed weak rate function
$\beta$ is preserved.

We next spell out the polynomial family which connects the customary weak
Poincar\'{e} and weak logarithmic Sobolev inequalities. All inequalities below
are understood for bounded non-negative $f\in\D(\E)$. Consider
\begin{equation}\label{eq:weak-poly}
  \Psi_p(f)
  \le
  \alpha(r)\E(f,f)
  +r\,\Osc(f)^2,
  \qquad r>0.
\end{equation}
The following elementary optimization explains the coefficient appearing
in the equivalent power formulation.

\begin{prp}\label{prop:weak-coefficient}
Fix $p\in[1,2]$. The inequality
\eqref{eq:weak-poly} is equivalent to
\begin{equation}\label{eq:weak-functional}
  \Phi(f)\Psi_p(f)
  \le
  \E(f,f),
\end{equation}
where
\[
\Phi(f)=
\begin{cases}
1, & \Psi_p(f)=0\ \text{or}\ \Osc(f)=0,\\[1mm]
\displaystyle\sup_{t\in(0,1)}
\frac{1-t}{\alpha\!\left(\frac{t\Psi_p(f)}{\Osc(f)^2}\right)},
& \Psi_p(f)\neq0\ \text{and}\ \Osc(f)\neq0.
\end{cases}
\]
\end{prp}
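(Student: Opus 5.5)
The plan is to prove the equivalence of \eqref{eq:weak-poly} and \eqref{eq:weak-functional} for each fixed bounded non-negative $f\in\D(\E)$, by reparametrizing $r$ relative to the natural scale $\Psi_p(f)/\Osc(f)^2$. First I dispose of the degenerate cases. If $\Psi_p(f)=0$, both inequalities are trivial, since $\E(f,f)\ge0$ and the right-hand side of \eqref{eq:weak-poly} is non-negative. If $\Osc(f)=0$, then $f$ is a.e. constant. By the homogeneity $\Psi_p(\lambda)=\lambda^2\Psi_p(1)$ we get $\Psi_p(f)=0$, so this case reduces to the previous one. Hence $\Phi(f)=1$ is harmless in both cases. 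I will assume $\alpha$ takes values in $(0,\infty)$, so the quotients defining $\Phi$ make sense; the case $\alpha(r)=0$ would be handled by the convention $1/0=\infty$.

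In the main case $\Psi:=\Psi_p(f)>0$ and $O:=\Osc(f)>0$, I substitute $r=t\Psi/O^2$ with $t\in(0,\infty)$. This is a bijection of $(0,\infty)$, and it turns \eqref{eq:weak-poly} into
\[
\Psi\le \alpha\!\left(\tfrac{t\Psi}{O^2}\right)\E(f,f)+t\Psi,\qquad t>0.
\]
For $t\ge1$ this is automatic, because $t\Psi\ge\Psi$ and $\alpha\E\ge0$. For $t\in(0,1)$ it rearranges to
\[
\frac{1-t}{\alpha(t\Psi/O^2)}\,\Psi\le\E(f,f).
\]
So \eqref{eq:weak-poly} holding for all $r>0$ is equivalent to this bound holding for all $t\in(0,1)$. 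Taking the supremum over $t\in(0,1)$ gives $\Phi(f)\Psi_p(f)\le\E(f,f)$. Conversely, \eqref{eq:weak-functional} bounds each individual quotient, so reversing the algebra recovers \eqref{eq:weak-poly} for $r<\Psi/O^2$, and the range $r\ge\Psi/O^2$ is trivial as noted above.

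The only delicate point, which I expect to be the main obstacle, is the bookkeeping in two places. One is the interpretation of \eqref{eq:weak-poly} and \eqref{eq:weak-functional} as statements for each $f$ separately; the argument above is pointwise in $f$, so the equivalence also holds for the families over all $f$. The other is making sure the supremum may be infinite without breaking the argument. If $\Phi(f)=\infty$ with $\Psi>0$, then \eqref{eq:weak-functional} fails. In that case the rearranged inequality must also fail for some $t\in(0,1)$, because an unbounded supremum of the quotients forces one quotient times $\Psi$ to exceed $\E(f,f)$. This preserves the equivalence.
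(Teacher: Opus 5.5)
Your proof is correct and follows the paper's argument: the same substitution $r=t\Psi_p(f)/\Osc(f)^2$, rearrangement for $t\in(0,1)$, and supremum over $t$. You go slightly further than the paper by making the converse explicit (the reparametrization is a bijection of $(0,\infty)$ and the range $t\ge1$ is trivial) and by handling an infinite supremum; the one fact you leave unstated, $\Psi_p(f)\ge0$ so that $\Psi_p(f)\neq0$ means $\Psi_p(f)>0$, follows immediately from H\"older's inequality.
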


\begin{proof} If $\Psi_p(f)=0$ or $\Osc(f)^2=0$, the
claim is immediate.  Otherwise,
define
\[
  X=\Psi_p(f),\qquad
  Y=\E(f,f),\qquad
  O=\Osc(f)^2.
\]
Assume first that \eqref{eq:weak-poly} holds. Then we have
\[
  Y\ge \frac{X-rO}{\alpha(r)}.
\]
Set
\[
  r=\frac{tX}{O},
  \qquad 0<t<1.
\]
Then
\[
  Y
  \ge
 \frac{X(1-t)}{\alpha\Big(\frac{t X}{O}\Big)},\qquad 0<t<1.
\]
Taking the supremum over $t\in(0,1)$ gives
\[
  \Phi(f)\Psi_p(f)
  \le
  \E(f,f).
\]
This proves one direction. The reverse implication follows from the same
one-variable optimization, equivalently from Young's inequality. Taking
infima over the admissible constants gives
\eqref{eq:weak-functional}.
\end{proof}

Fix $\theta>0$ and
consider
\begin{equation}\label{eq:weak-polynomial-rate}
  \Psi_p(f)
  \le
  A_{p,\theta}\,r^{-\theta}\E(f,f)
  +r\,\Osc(f)^2,
  \qquad r>0.
\end{equation}
The following elementary optimization explains the coefficient appearing
in the equivalent power formulation.

\begin{exa}\label{exam}Fix $p\in[1,2]$.
  \begin{enumerate}
    \item[(1)] Let $\alpha(r)=A_{p,\theta}\,r^{-\theta}$ for some constants $\theta>0$ and $A_{p,\theta}>0$. The inequality
    \eqref{eq:weak-poly} is equivalent to
    \begin{equation}\label{eq:power-form}
      \Psi_p(f)^{1+\theta}
      \le
      C_{p,\theta}\,\E(f,f)\,\Osc(f)^{2\theta}.
    \end{equation}
    For the optimal constants,
    \begin{equation}\label{eq:optimal-constants}
      C_{p,\theta}^*
      =
      A_{p,\theta}^*
      \frac{(1+\theta)^{1+\theta}}{\theta^\theta}.
    \end{equation}
    \item[(2)] Let $\alpha(r)=Ae^{C/r}$ for some constants $A$ and $C>0$. The inequality
    \eqref{eq:weak-poly} is equivalent to
    \begin{equation*}
      \Psi_p(f)\leq \frac{A\exp(\frac{C\Osc(f)^2}{\Psi_p(f)t^*})}{1-t^*}\mathcal{E}(f,f),
    \end{equation*}
    where
    \begin{equation*}
      t^*=\frac{-C\frac{\Osc(f)^2}{\Psi_p(f)}+\sqrt{C^2\frac{\Osc(f)^4}{\Psi_p^2(f)}+4C\frac{\Osc(f)^2}{\Psi_p(f)}}}{2}.
    \end{equation*}
    \item[(3)] Let $\alpha(r)=A(\log\frac{1}{r})^\theta, 0<r\leq1$ for some constants $A$ and $\theta >0$. The inequality \eqref{eq:weak-poly} is equivalent to
    \begin{equation*}
      \Psi_p(f)\leq \frac{A(\log\frac{\Osc(f)^2}{\Psi_p(f)t^*})^{\theta}}{1-t^*}\mathcal{E}(f,f),
    \end{equation*}
    where
    \begin{equation*}
      \frac{1-t^*}{t^*}\theta=\log\left(\frac{\Osc(f)^2}{\Psi_p(f)}\right)-\log t^*.
    \end{equation*}
  \end{enumerate}
\end{exa}

\begin{proof}
  \begin{enumerate}
    \item[(1)] Let $\alpha(r)=A_{p,\theta}\,r^{-\theta}$, then
    \[
    \Phi(f)=
    \begin{cases}
    1, & \Psi_p(f)=0\ \text{or}\ \Osc(f)=0,\\[1mm]
    \displaystyle\sup_{t\in(0,1)}
    \frac{1-t}{A_{p,\theta}
    \left(\frac{t\Psi_p(f)}{\Osc(f)^2}\right)^{-\theta}},
    & \Psi_p(f)\neq0\ \text{and}\ \Osc(f)\neq0.
    \end{cases}
    \]
      The function $t^\theta(1-t)$ is maximized at
      \[
      t=\frac{\theta}{1+\theta},
      \]
      and its maximum is
      \[
      \frac{\theta^\theta}{(1+\theta)^{1+\theta}}.
      \]
      Hence
      \[
      \Psi_p(f)^{1+\theta}
      \le
      A_{p,\theta}
      \frac{(1+\theta)^{1+\theta}}{\theta^\theta}
      \mathcal{E}(f,f) \Osc(f)^{2\theta}.
      \]
    \item[(2)] Let $\alpha(r)=Ae^{C/r}$, then
    $$\Phi(f)=\begin{cases}&1, ~~~~~~~~~~~~~~~~~~\quad\quad\quad\quad\quad\quad\quad\Psi_p(f)=0~\text{or}~\Osc(f)=0,\\
      &\displaystyle\sup_{t\in (0,1)}\frac{1-t}{A\exp\Big(\frac{C\Osc(f)^2}{\Psi_p(f) t}\Big)}, ~~~~~~~~\quad\Psi_p(f)\neq0~\text{or}~\Osc(f)\neq0.\end{cases}$$
      Setting $c'=-C\frac{\Osc(f)^2}{\Psi_p(f)}<0,$ then
      \begin{equation*}
        h(t)=\frac{1-t}{A\exp\Big(\frac{C\Osc(f)^2}{\Psi_p(f) t}\Big)}=\frac{(1-t)}{A}e^{c'/t}.
      \end{equation*}
      Thus
      \begin{equation*}
        h'(t)=-\frac{e^{c'/t}}{At^2}(t^2+c'(1-t)).
      \end{equation*}
      The function $h(t)$ is maximized at
      \begin{equation*}
        t=\frac{c'+\sqrt{c'^2-4c'}}{2}
      \end{equation*}
      and its maximum is
      \begin{equation*}
        \frac{1-t^*}{A\exp(\frac{C\Osc(f)^2}{\Psi_p(f)t^*})}.
      \end{equation*}
      Hence
      \begin{equation*}
        \Psi_p(f)\leq \frac{A\exp(\frac{C\Osc(f)^2}{\Psi_p(f)t^*})}{1-t^*}\mathcal{E}(f,f).
      \end{equation*}
    \item[(3)] Let $\alpha(r)=A(\log\frac{1}{r})^{\theta},$ then
      $$\Phi(f)=\begin{cases}&1, ~~~~~~~~\quad\quad\quad\quad\quad\quad\quad\Psi_p(f)=0~\text{or}~\Osc(f)=0,\\
      &\displaystyle\sup_{t\in (0,1)}\frac{1-t}{A(\log\frac{\Osc(f)^2}{\Psi_p(f)t})^\theta}, ~~\quad\Psi_p(f)\neq0~\text{or}~\Osc(f)\neq0.\end{cases}$$
      Setting $c=\log\frac{\Osc(f)^2}{\Psi_p(f)}>0,$ then
      \begin{equation*}
        h(t):=\frac{1-t}{A(c-\log t)^{\theta}},
      \end{equation*}
      thus
      \begin{equation*}
        h'(t)=\frac{-(c-\log t)+\frac{1-t}{t}\theta}{A(c-\log t)^{\theta+1}}.
      \end{equation*}
      The function $h(t)$ is maximized at
      \begin{equation*}
        \frac{1-t}{t}\theta=c-\log t.
      \end{equation*}
      Hence
      \begin{equation*}
      \Psi_p(f)\leq \frac{A(\log\frac{\Osc(f)^2}{\Psi_p(f)t^*})^{\theta}}{1-t^*}\mathcal{E}(f,f)
    \end{equation*}
  \end{enumerate}
\end{proof}

At $p=1$, \eqref{eq:weak-polynomial-rate} reduces to the weak Poincar\'{e} inequality,
\[
  \Var_\mu(f)
  \le
  A_{1,\theta}\,r^{-\theta}\E(f,f)
  +r\,\Osc(f)^2,
  \qquad r>0,
\]
while at $p=2$ it becomes the weak logarithmic Sobolev inequality
\[
  \frac12\Ent_\mu(f^2)
  \le
  A_{2,\theta}\,r^{-\theta}\E(f,f)
  +r\,\Osc(f)^2,
  \qquad r>0.
\]
Thus \eqref{eq:weak-polynomial-rate} interpolates between the two weak endpoints.

The same-rate theorem immediately gives the monotonicity of both the
polynomial weak rates and the equivalent power constants.

\begin{cor}\label{cor:weak-poly-monotonicity}
Fix $\theta>0$. If \eqref{eq:weak-polynomial-rate} holds at an exponent
$p\in[1,2]$ with a coefficient $A$, then it holds for every
$1\le q\le p$ with the same coefficient $A$. Consequently,
\[
  A_{q,\theta}^*\le A_{p,\theta}^*,
  \qquad
  C_{q,\theta}^*\le C_{p,\theta}^*,
  \qquad
  1\le q\le p\le2.
\]
Equivalently, for each fixed $\theta>0$, both
\[
  p\longmapsto A_{p,\theta}^*
  \quad\text{and}\quad
  p\longmapsto C_{p,\theta}^*
\]
are non-decreasing on $[1,2]$.
\end{cor}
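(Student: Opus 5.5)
The first assertion is Theorem~\ref{thm:weak-beta-intro} specialized to the rate function $\beta(r)=A\,r^{-\theta}$. Assume \eqref{eq:weak-polynomial-rate} holds at exponent $p$ with coefficient $A$, that is, $WB_p(\beta)$ holds with $\beta(r)=Ar^{-\theta}$ for every bounded non-negative $f\in\D(\E)$. Theorem~\ref{thm:weak-beta-intro} then gives $WB_q(\beta)$ for every $1\le q\le p$ with the same $\beta$. Hence \eqref{eq:weak-polynomial-rate} holds at $q$ with coefficient $A$. No property of $\beta$ beyond its being a fixed function is needed, so nothing further must be checked here.

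For the optimal constants, I would define $A_{p,\theta}^*$ as the infimum of the admissible coefficients $A$ at exponent $p$, with the convention $\inf\emptyset=+\infty$. If $A_{p,\theta}^*=\infty$, the inequality $A_{q,\theta}^*\le A_{p,\theta}^*$ is trivial. Otherwise, every admissible $A$ at $p$ is admissible at $q$ by the first part, so the admissible set at $q$ contains the one at $p$, and taking infima gives $A_{q,\theta}^*\le A_{p,\theta}^*$.

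For $C_{p,\theta}^*$, I would invoke Example~\ref{exam}(1), specifically \eqref{eq:optimal-constants}. It states $C_{p,\theta}^*=A_{p,\theta}^*\,(1+\theta)^{1+\theta}\theta^{-\theta}$, where the factor depends only on $\theta$ and is positive. The inequality for $A^*$ therefore transfers directly to $C^*$. Alternatively, one could argue directly: the power form \eqref{eq:power-form} at $p$ with constant $C$ is equivalent to \eqref{eq:weak-polynomial-rate} at $p$ with $A=C\theta^\theta/(1+\theta)^{1+\theta}$. This $A$ is then admissible at $q$, and converting back via Example~\ref{exam}(1) gives \eqref{eq:power-form} at $q$ with the same $C$. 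The "equivalently" clause is just a restatement of these two inequalities over all pairs $1\le q\le p\le 2$.

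The only delicate point is the endpoint $p=2$, where $\Psi_2$ is the continuous extension. Theorem~\ref{thm:weak-beta-intro} and Proposition~\ref{prop:envelope} are stated on the closed interval $[1,2]$, so I expect no obstacle there. I would also check that the equivalence in Example~\ref{exam}(1) holds at the level of optimal constants; it does, because the equivalence is an exact bijection $A\leftrightarrow C$ between admissible constants.
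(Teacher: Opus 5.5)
Your proposal is correct and matches the paper's proof. You specialize Theorem~\ref{thm:weak-beta-intro} to $\beta(r)=Ar^{-\theta}$, obtain $A^*_{q,\theta}\le A^*_{p,\theta}$ from the inclusion of admissible coefficient sets, and transfer the inequality to $C^*$ through the factor in \eqref{eq:optimal-constants}, which depends only on $\theta$; your additions (the convention $\inf\emptyset=+\infty$ and the exact correspondence $A\leftrightarrow C$) only make explicit what the paper leaves implicit.
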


\begin{proof}
Apply Theorem~\ref{thm:weak-beta-intro} with
$\beta(r)=Ar^{-\theta}$. The monotonicity of $A_{p,\theta}^*$ follows
immediately. Since the factor in \eqref{eq:optimal-constants} depends only
on $\theta$, the same monotonicity holds for $C_{p,\theta}^*$.
\end{proof}

In particular,
\[
  A_{1,\theta}^*
  \le
  A_{p,\theta}^*
  \le
  A_{2,\theta}^*,
  \qquad
  C_{1,\theta}^*
  \le
  C_{p,\theta}^*
  \le
  C_{2,\theta}^*.
\]

\section{Contractivity of the semigroup \texorpdfstring{$P_t$}{Pt}}
Let $(P_t)_{t\ge0}$ and $L$ denote, respectively, the Markov semigroup and
generator associated to $(\E,\D(\E))$. We use the following assumptions:
\begin{enumerate}
  \item[(a)] $\mathcal{E}(f^q,f)=\frac{4q}{(1+q)^2}\mathcal{E}(f^{(1+q)/2},f^{(1+q)/2})$ for any $q>0$ and any uniformly positive $f\in L^{\infty}(\mu)\cap\D(L)$.
  \item[(b)] $P_t$ is symmetric.
\end{enumerate}
\begin{thm}
    If (a) holds, then \eqref{eq:weak-poly} is equivalent to
    \begin{equation}\label{eq:semigroup}
        \Psi_p((P_tf)^{1/p})-s\Osc(f^{1/p})^2
        \leq \exp\!\left(-\frac{2t}{\alpha(s)}\right)\left[\Psi_p(f^{1/p})-s\Osc(f^{1/p})^2\right].
    \end{equation}
    In the sequel, we assume that $f$ is not constant so that the left-hand side is well-defined. Conversely, if $(b)$ holds, then \eqref{eq:weak-poly} implies \eqref{eq:semigroup}.
\end{thm}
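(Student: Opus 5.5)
We will use the standard Gronwall-type argument that links functional inequalities to exponential decay along the semigroup, applied to $g_t:=(P_tf)^{1/p}$. Here $f$ is a bounded, uniformly positive function in $\D(L)$; the general case follows by truncation and approximation. The quantities to track are
\[
u(t):=\mu\bigl((P_tf)^{2/p}\bigr)=\mu(g_t^2),\qquad \mu(g_t^p)=\mu(P_tf)=\mu(f).
\]
The second quantity is constant in $t$ because the form is conservative, so $\mu(P_tf)=\mu(f)$.

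\textbf{Step 1: derivative of $\Psi_p(g_t)$.} Since $\mu(g_t^p)$ is constant, we have
\[
\frac{\d}{\d t}\Psi_p(g_t)=\frac{1}{2-p}\,\frac{\d}{\d t}\mu\bigl((P_tf)^{2/p}\bigr)
=\frac{2}{p(2-p)}\,\mu\bigl((P_tf)^{2/p-1}LP_tf\bigr).
\]
Put $h=P_tf$ and $q=2/p-1$. Then
\[
\mu(h^{q}Lh)=-\E(h^q,h).
\]
Assumption (a) gives
\[
\E(h^q,h)=\frac{4q}{(1+q)^2}\,\E\bigl(h^{(1+q)/2},h^{(1+q)/2}\bigr),
\]
and here $(1+q)/2=1/p$. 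With $4q/(1+q)^2=p(2-p)$, this yields
\[
\frac{\d}{\d t}\Psi_p(g_t)=-2\,\E(g_t,g_t).
\]

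\textbf{Step 2: the implication \eqref{eq:weak-poly} $\Rightarrow$ \eqref{eq:semigroup}.} Under (a) or (b), $P_t$ is Markov, so $\Osc(g_t)\le\Osc(f^{1/p})$. Applying \eqref{eq:weak-poly} to $g_t$ with $r=s$ gives
\[
\E(g_t,g_t)\ge\frac{\Psi_p(g_t)-s\Osc(g_t)^2}{\alpha(s)}\ge\frac{\Psi_p(g_t)-s\Osc(f^{1/p})^2}{\alpha(s)}.
\]
Set $\phi(t):=\Psi_p(g_t)-s\Osc(f^{1/p})^2$. The oscillation term here is frozen at time $0$, so it does not depend on $t$. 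Step 1 then gives $\phi'\le-\frac{2}{\alpha(s)}\phi$, and Gronwall's lemma yields
\[
\phi(t)\le \e^{-2t/\alpha(s)}\phi(0).
\]
Finally, $\Osc(g_t)\le\Osc(f^{1/p})$ shows that \eqref{eq:semigroup}, read with the oscillation of the initial datum, holds. Under (b) alone, where (a) is unavailable, I would replace the identity of Step 1 by the inequality
\[
\E(h^{q},h)\ge p(2-p)\,\E\bigl(h^{1/p},h^{1/p}\bigr).
\]
This is the Stroock–Varopoulos inequality for symmetric Markov semigroups, valid for $0<q\le1$. It only strengthens the differential inequality.

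\textbf{Step 3: the converse under (a).} Suppose \eqref{eq:semigroup} holds for all $t$. Both sides are equal at $t=0$, so we can subtract them, divide by $t$, and let $t\downarrow0$. Using Step 1 this gives
\[
-2\E(f^{1/p},f^{1/p})\le-\frac{2}{\alpha(s)}\Bigl[\Psi_p(f^{1/p})-s\Osc(f^{1/p})^2\Bigr].
\]
This is exactly \eqref{eq:weak-poly} for $g=f^{1/p}$ with $r=s$. It extends to every bounded non-negative $g\in\D(\E)$ by taking $f=(g+\varepsilon)^p$ regularized into $\D(L)$ and letting $\varepsilon\to0$.

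\textbf{Main obstacle.} The delicate point is the technical justification of the differentiation in Step 1 and of the closing approximation step. Uniform positivity and boundedness of $P_tf$ are needed so that $h^q\in\D(\E)$ and so that (a) applies. Moreover, the oscillation in \eqref{eq:semigroup} must be read at the initial datum, otherwise the Gronwall step fails. I would handle both points by first working with $f\ge\varepsilon$ in $\D(L)\cap L^\infty$, and then passing to limits using the lower semicontinuity of $\E$.
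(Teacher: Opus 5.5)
Your proposal is correct and follows the paper's proof. You compute $\frac{\d}{\d t}\Psi_p((P_tf)^{1/p})=-\frac{2}{p(2-p)}\E((P_tf)^{(2-p)/p},P_tf)$, reduce it to $-2\E((P_tf)^{1/p},(P_tf)^{1/p})$ via equality in (a) or via the Stroock--Varopoulos inequality (Lemma~5.1.3 of \cite{Wangbook}) under (b), apply \eqref{eq:weak-poly} with $r=s$ and the oscillation frozen at $t=0$, and close with Gronwall. Your differentiation at $t=0$ makes explicit what the paper only calls ``the same argument run in reverse.''

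One small correction to your closing remark concerns extending the converse from $g=f^{1/p}$, with $f\in\D(L)$ uniformly positive, to general bounded $g\in\D(\E)$. There, lower semicontinuity of $\E$ points the wrong way, since you need $\limsup_n\E(g_n,g_n)\le\E(g,g)$ for the approximants. You need approximants whose energies actually converge, a point the paper itself does not address.
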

\begin{proof}
  It suffices to consider uniformly positive
  $f\in L^{\infty}(\mu)\cap\mathcal{D}(L)$. For such a function,
    \begin{equation*}
        \frac{d}{dt}\Psi_p((P_tf)^{1/p})=\frac{2}{p(2-p)}\mu((P_tf)^{2/p-1}LP_tf)=\frac{-2}{p(2-p)}\mathcal{E}((P_tf)^{2/p-1},P_tf)
    \end{equation*}
    for $t\geq0$. By Lemma~5.1.3 of \cite{Wangbook} and assumption (b),
    \begin{equation*}
      \mathcal{E}((P_tf)^{(2-p)/p},P_tf)\geq p(2-p)\mathcal{E}((P_tf)^{1/p},(P_tf)^{1/p}),
    \end{equation*}
    while assumption (a) gives equality. Hence
    \begin{equation*}
      \frac{d}{dt}\Psi_p((P_tf)^{1/p})\leq-2\mathcal{E}((P_tf)^{1/p},(P_tf)^{1/p}),
    \end{equation*}
    Combining this estimate with \eqref{eq:weak-poly}, applied to
    $(P_tf)^{1/p}$, gives
    \begin{equation*}
      \frac{d}{dt}\Psi_p((P_tf)^{1/p})\leq\frac{2}{\alpha(s)}s\Osc((P_tf)^{1/p})^2-\frac{2}{\alpha(s)}\Psi_p((P_tf)^{1/p}).
    \end{equation*}
    By Gronwall's lemma, we get 
    \begin{equation*}
        \Psi_p((P_tf)^{1/p})-s\Osc(f^{1/p})^2\leq e^{\frac{-2t}{\alpha(s)}}\left[\Psi_p(f^{1/p})-s\Osc(f^{1/p})^2\right].
    \end{equation*}
    The proof of the converse direction is essentially the same argument run in reverse.
\end{proof}

We show that the following condition is sufficient for our purpose:
\begin{equation}\label{eq:integrability}
  \int_1^{\infty}\frac{\alpha(r)}{r}dr<\infty.
\end{equation}
\begin{thm}
  Assume that\eqref{eq:integrability} holds.
  Then \eqref{eq:weak-poly} implies
  \begin{equation*}
    \Psi_p((P_tf)^{1/p})\leq\xi(t)\Osc(f^{1/p})^2,\qquad t>0,f\in\mathcal{D}(L),
  \end{equation*}
  where $\xi(t):=\inf_{\epsilon\in(0,1)} \frac{1}{\epsilon}\eta^{-1}(2(1-\epsilon)t)$ and  
  \begin{equation*}
    \eta(t):=\int_t^{\infty}\frac{\alpha(r)}{r}dr,\qquad t>0.
  \end{equation*}
\end{thm}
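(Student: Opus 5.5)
The plan is to follow the standard route from weak Poincaré inequalities to semigroup decay, as in R\"ockner--Wang, using the differential inequality from the proof of the previous theorem. Fix $f\in\D(L)$ and, after truncation and adding a small constant, assume $f$ is uniformly positive and bounded. Set
\[
u(t):=\Psi_p((P_tf)^{1/p}),\qquad \oo:=\Osc(f^{1/p})^2 .
\]
The semigroup is Markov, so $\operatorname*{ess\,inf} f\le P_tf\le\operatorname*{ess\,sup} f$. Hence $\Osc((P_tf)^{1/p})^2\le\oo$ for all $t$.

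From the proof of the preceding theorem we have
\[
u'(t)\le -2\E\bigl((P_tf)^{1/p},(P_tf)^{1/p}\bigr).
\]
Inserting \eqref{eq:weak-poly} applied to $(P_tf)^{1/p}$ gives, for every $r>0$,
\[
u'(t)\le -\frac{2}{\alpha(r)}\bigl(u(t)-r\oo\bigr).
\]
The key step is to choose $r$ depending on $u(t)$, namely $r=\epsilon u(t)/\oo$ with $\epsilon\in(0,1)$. This yields
\[
u'(t)\le -\frac{2(1-\epsilon)u(t)}{\alpha(\epsilon u(t)/\oo)}.
\]
Writing $v(t)=\epsilon u(t)/\oo$, this becomes $\dfrac{v'\,\alpha(v)}{v}\le -2(1-\epsilon)$. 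Since $\eta'(v)=-\alpha(v)/v$, we get $\frac{d}{dt}\eta(v(t))\ge 2(1-\epsilon)$, and integrating over $[0,t]$ gives $\eta(v(t))\ge \eta(v(0))+2(1-\epsilon)t\ge 2(1-\epsilon)t$.

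The function $\eta$ is finite by \eqref{eq:integrability}, since $\alpha$ is locally bounded where needed. It is also non-increasing. So $v(t)\le\eta^{-1}(2(1-\epsilon)t)$, where $\eta^{-1}$ is the generalized inverse $\eta^{-1}(s)=\inf\{r>0:\eta(r)\le s\}$. Therefore $u(t)\le \frac1\epsilon\eta^{-1}(2(1-\epsilon)t)\,\oo$. Taking the infimum over $\epsilon$ gives the claim with $\xi(t)$.

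The main obstacle is making this integration rigorous. I would treat three points.
\begin{enumerate}
\item[(i)] If $u(t)=0$ at some time, then $f$ is essentially constant from then on and the bound is trivial. Otherwise $u>0$, and by the differential inequality $u$ is non-increasing.
\item[(ii)] $\alpha$ may fail to be continuous or positive. I would handle this by comparison through $\eta$ directly: $\eta\circ v$ is absolutely continuous with the chain-rule derivative a.e., because $\eta$ is absolutely continuous on compacts of $(0,\infty)$ and $v$ is monotone and $C^1$.
\item[(iii)] The possible flatness of $\eta$ is handled by the generalized inverse.
\end{enumerate}
Finally, the restriction to uniformly positive bounded $f$ is removed by approximation: replace $f$ by $(f\wedge n)+\delta$ and use the $L^2$-continuity of $P_t$ and of $\Psi_p$, together with monotonicity of $\Osc$ under these operations.
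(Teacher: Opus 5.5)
Your proposal is correct and is essentially the paper's proof. It uses the same inequality $u'(t)\le-2\E((P_tf)^{1/p},(P_tf)^{1/p})$ from the preceding theorem, the same adaptive choice $r=\epsilon u(t)/\Osc(f^{1/p})^2$, and the same substitution turning $\int_0^t\alpha(v)v'/v\,\d s$ into an increment of $\eta$, which gives $\eta\bigl(\epsilon u(t)/\Osc(f^{1/p})^2\bigr)\ge 2(1-\epsilon)t$; your extra care (the case $u=0$, the a.e.\ chain rule for $\eta\circ v$, the generalized inverse) only makes explicit what the paper leaves implicit, though the truncation $f\wedge n$ is unnecessary and need not preserve $\D(L)$, since the statement only has content for bounded $f$.
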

\begin{proof}
  Fix $f\in\mathcal{D}(L)$, and set $O=\Osc(f^{1/p})^{-2}$ and
  $h(t):=\Psi_p((P_tf)^{1/p})$. By \eqref{eq:weak-poly} and the bound
  $\Osc((P_tf)^{1/p})\leq\Osc(f^{1/p})$, we have
  \begin{equation*}
    h'(t)\leq-2\mathcal{E}((P_tf)^{1/p},(P_tf)^{1/p})\leq-\frac{2}{\alpha(r)}h(t)+\frac{2}{\alpha(r)}r\Osc((P_tf)^{1/p})^2.
  \end{equation*}
  Taking $r=\epsilon h(t)O$, we arrive at
  \begin{equation*}
    h'(t)\leq -\frac{2(1-\epsilon)h(t)}{\alpha(\epsilon h(t)O)}.
  \end{equation*}
  Thus,
  \begin{equation*}
    -2(1-\epsilon)t\geq\int_0^t\frac{h'(s)\alpha(\epsilon h(s)O)}{h(s)}ds=\int_{\epsilon h(0)O}^{\epsilon h(t)O}\frac{\alpha(r)}{r} dr\geq-\eta(\epsilon h(t)O).
  \end{equation*}
  This implies that $h(t)\leq \frac{1}{\epsilon}\eta^{-1}(2(1-\epsilon)t)\Osc(f^{1/p})^2$.
\end{proof}

We first consider $\rho\in\mathcal{D}(L_{\mathcal{E}})$ such that
\begin{equation}\label{eq:condition}
    \mathcal{E}((\rho-t)^{+}\wedge s,(\rho-t)^{+}\wedge s)\leq L_{\mathcal{E}}(\rho)^2\mu(t\leq\rho<s+t),\qquad s,t\geq 0,
\end{equation}
which holds in particular for symmetric diffusions and $\rho\in\mathcal{D}(L_{\mathcal{E}})$ with $\mu(\rho=t)=0$ for all $t\in \mathbb{R}$.
\begin{thm}\label{thm:concentration}
    Assume that $(\mathcal{E},\mathcal{D}(\mathcal{E}))$ is a conservative
    Dirichlet form on $L^2(\mu)$, where $\mu$ is a probability measure. Let
    $\rho\in\mathcal{D}(L_{\mathcal{E}})$ be nonnegative, satisfy
    $L_{\mathcal{E}}(\rho)\leq1$, and obey \eqref{eq:condition}. Let $t_0>0$
    be such that $a:=\mu(\rho<t_0)\in(0,1)$. Assume that
    \eqref{eq:weak-poly} holds for some
    decreasing positive function $\alpha$. For fixed $\epsilon\in (0,1)$ and any $m>0,$ define
    \begin{equation*}
      \phi_m(t):=\int_t^{1-a}\frac{p_0+m^{-2}\alpha(\epsilon p_0s)}{(1-\epsilon)p_0 s}ds,\qquad t\in(0,1-a),
    \end{equation*}
    where $p_0=\frac{1}{2-p}\bigl(1-(1-a)^{(2-p)/p}\bigr)$.
    Then
    \begin{equation*}
      \mu(\rho\geq t_0+mn)\leq\phi_m^{-1}(n),\qquad n\in\mathbb{N},m>0.
    \end{equation*}
\end{thm}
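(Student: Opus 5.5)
We adapt the standard capacity/iteration argument for concentration from weak functional inequalities (as in Wang's monograph, Chapter~6) to the Beckner functional. The plan is to apply the weak Beckner inequality \eqref{eq:weak-poly} to truncations of $\rho$ and thereby obtain a recursive inequality for $u_n:=\mu(\rho\ge t_0+mn)$. We then compare this recursion with the ODE whose solution is $\phi_m^{-1}$.

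\emph{Step 1: test functions.} For $n\ge0$ set
\[
f_n:=m^{-1}\bigl((\rho-t_0-mn)^+\wedge m\bigr).
\]
Then $0\le f_n\le1$, so $\Osc(f_n)\le1$. Moreover $f_n=1$ on $\{\rho\ge t_0+m(n+1)\}$ and $f_n=0$ on $\{\rho<t_0+mn\}$. By \eqref{eq:condition} with $L_\E(\rho)\le1$,
\[
\E(f_n,f_n)\le m^{-2}\bigl(u_n-u_{n+1}\bigr).
\]

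\emph{Step 2: lower bound for $\Psi_p(f_n)$.} Write $u=u_{n+1}$. Since $f_n$ is $\{0,1\}$-valued except on the layer $\{t_0+mn\le\rho<t_0+m(n+1)\}$, we have $\mu(f_n^2)\ge u_{n+1}$ and $\mu(f_n^p)\le u_n$. For a rigorous bound, we would compare with the indicator of $\{\rho\ge t_0+m(n+1)\}$. The cleaner route is to use that $f_n$ vanishes on a set of measure at least $a$ (because $\{\rho<t_0\}\subset\{f_n=0\}$). Then Hölder on $\{f_n>0\}$ gives
\[
\mu(f_n^p)^{2/p}\le \mu(f_n^2)\,\mu(f_n>0)^{(2-p)/p}\le \mu(f_n^2)(1-a)^{(2-p)/p}.
\]
Hence
\[
\Psi_p(f_n)\ge p_0\,\mu(f_n^2)\ge p_0\,u_{n+1},
\]
which is exactly where the constant $p_0$ comes from.

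\emph{Step 3: recursion and integration.} Insert Steps 1–2 into \eqref{eq:weak-poly} with the choice $r=\epsilon p_0u_{n+1}$:
\[
p_0u_{n+1}\le \alpha(\epsilon p_0u_{n+1})\,m^{-2}(u_n-u_{n+1})+\epsilon p_0u_{n+1}.
\]
Therefore
\[
(1-\epsilon)p_0u_{n+1}\le m^{-2}\alpha(\epsilon p_0u_{n+1})(u_n-u_{n+1}).
\]
This is almost $1\le \int_{u_{n+1}}^{u_n}\frac{m^{-2}\alpha(\epsilon p_0s)}{(1-\epsilon)p_0s}\,ds$. Because $\alpha$ is decreasing, for $s\in[u_{n+1},u_n]$ we have $\alpha(\epsilon p_0s)\ge\alpha(\epsilon p_0u_n)$, which goes the wrong way. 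The extra summand $p_0$ in the numerator of $\phi_m$ is designed to repair this. Split into two cases. If $u_n\ge 2u_{n+1}$ (or a comparable ratio), the term $\int_{u_{n+1}}^{u_n}\frac{p_0}{(1-\epsilon)p_0s}\,ds=\frac{\log(u_n/u_{n+1})}{1-\epsilon}$ already exceeds $1$. Otherwise, $u_{n+1}$ and $u_n$ are comparable, and we bound $u_n-u_{n+1}\le u_n\log(u_n/u_{n+1})$, using monotonicity of $\alpha$ at the endpoint. In either case we get
\[
\phi_m(u_{n+1})-\phi_m(u_n)\ge1.
\]
Since $u_0\le\mu(\rho\ge t_0)=1-a$ and $\phi_m(1-a)=0$, induction gives $\phi_m(u_n)\ge n$. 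Because $\phi_m$ is decreasing, this yields $u_n\le\phi_m^{-1}(n)$.

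\emph{Main obstacle.} The delicate step is Step 3, namely converting the discrete recursion into the integral increment $\ge1$ with exactly the integrand $\frac{p_0+m^{-2}\alpha(\epsilon p_0s)}{(1-\epsilon)p_0s}$. The difficulty is that $\alpha$ is decreasing, so its value at the lower endpoint dominates. We would first try the two-case splitting above; if it fails, we would try the elementary inequality $x-y\le x\log(x/y)$ together with absorbing the discrepancy into the $p_0$ term.
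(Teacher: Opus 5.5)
Your Steps 1 and 2 coincide with the paper's argument: the same truncation $((\rho-t)^+/m)\wedge 1$ with $t=t_0+mn$, and the same H\"older bound giving $\Psi_p(f)\ge p_0\,\mu(f^2)\ge p_0u_{n+1}$. Step 3, however, has a genuine gap.

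With $r=\epsilon p_0u_{n+1}$ your inequality reads $1\le \frac{m^{-2}\alpha(\epsilon p_0u_{n+1})}{(1-\epsilon)p_0u_{n+1}}(u_n-u_{n+1})$. This evaluates the (decreasing) integrand $g(s)=\frac{p_0+m^{-2}\alpha(\epsilon p_0 s)}{(1-\epsilon)p_0 s}$ at the lower endpoint $u_{n+1}$, where it is largest, and the case split does not repair this.

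In your second case, monotonicity of $\alpha$ gives only $\alpha(\epsilon p_0 s)\le\alpha(\epsilon p_0u_{n+1})$ on $[u_{n+1},u_n]$, which is the wrong direction. Nothing controls the size of the drop. Take $\alpha$ equal to a huge constant $A$ on $(0,\epsilon p_0u_{n+1}]$ and to a small constant beyond, and let $u_n=(1+\eta)u_{n+1}$ with $\eta$ small. Your one-step inequality holds once $A\eta\ge(1-\epsilon)p_0m^2$, yet $\int_{u_{n+1}}^{u_n}g(s)\,ds$ is of order $\eta$.

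The $p_0$-part of the integral contributes only $\log(u_n/u_{n+1})/(1-\epsilon)$, which is small in exactly this regime. The inequality $x-y\le x\log(x/y)$ does not touch the $\alpha$-factor. Also, in your first case a ratio $2$ is not enough for small $\epsilon$; you need $u_n/u_{n+1}\ge e^{1-\epsilon}$. So once $r$ is fixed proportional to $u_{n+1}$, the information needed for the integral increment is lost.

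The missing idea is to rearrange before choosing $r$. Writing $p_0u_{n+1}=p_0u_n-p_0(u_n-u_{n+1})$ in $p_0u_{n+1}\le m^{-2}\alpha(r)(u_n-u_{n+1})+r$ gives
\[
p_0u_n-r\le\bigl(p_0+m^{-2}\alpha(r)\bigr)(u_n-u_{n+1}),\qquad r>0.
\]
Now take $r=\epsilon p_0u_n$, proportional to the \emph{upper} endpoint. This yields $1\le g(u_n)(u_n-u_{n+1})$. Since $g$ is decreasing, $g(u_n)(u_n-u_{n+1})\le\int_{u_{n+1}}^{u_n}g(s)\,ds$, so $\phi_m(u_{n+1})-\phi_m(u_n)\ge1$ with no case distinction.

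This is exactly the paper's route; it writes the step as $h(t+m)\le\frac{m^{-2}\alpha(r)h(t)+r}{p_0+m^{-2}\alpha(r)}$ with $h(s)=\mu(\rho\ge s)$ and then sets $r=\epsilon p_0h(t)$. It also shows that the summand $p_0$ in the numerator of $\phi_m$ is simply the cost of moving $p_0(u_n-u_{n+1})$ to the right-hand side, not a correction device. Your telescoping from $u_0=1-a$, $\phi_m(1-a)=0$ then completes the proof.
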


\begin{proof}
  For $t\geq t_0,$ let $f=\frac{(\rho-t)^{+}}{m}\wedge 1$. One has $\Osc(f)\leq 1$ and
  \begin{equation*}
    \mu(f^p)^{2/p}\leq(\mu(f^2)^{p/2}\mu(\mathbf{1}_{\{f>0\}})^{(2-p)/2})^{2/p}\leq\mu(f^2)(1-a)^{(2-p)/p}.
  \end{equation*}
  By \eqref{eq:condition},
  \begin{equation*}
    \mathcal{E}(f,f)\leq m^{-2}\mu(\mathbf{1}_{\{t\leq\rho<t+m\}})=m^{-2}\mu(t\leq\rho<t+m).
  \end{equation*}
  By combining this with \eqref{eq:weak-poly} for $f$, we obtain
  \begin{equation*}
    \frac{1}{2-p}\mu(f^2)(1-(1-a)^{(2-p)/p})\leq m^{-2}\alpha(r)\mu(t\leq\rho<t+m)+r.
  \end{equation*}
  Letting $h(s):=\mu(\rho\geq s)$ for $s>0$, we arrive at
  \begin{equation*}
    p_0h(t+m)\leq\frac{1}{2-p}(1-(1-a)^{(2-p)/p})\mu(f^2)\leq m^{-2}\alpha(r)(h(t)-h(t+m))+r,
  \end{equation*}
  Hence,
  \begin{equation*}
    h(t+m)\leq\frac{m^{-2}\alpha(r)h(t)+r}{p_0+m^{-2}\alpha(r)},\qquad r>0.
  \end{equation*}
  By letting $r=\epsilon p_0 h(t)$ we obtain
  \begin{equation*}
    -\int_{h(t+m)}^{h(t)}ds=h(t+m)-h(t)\leq -\frac{(1-\epsilon)p_0h(t)}{p_0+m^{-2}\alpha(\epsilon p_0h(t))}.
  \end{equation*}
  Since $h(t)$ is decreasing in t and $(p_0+m^{-2}\alpha(\epsilon p_0s))/s$ is decreasing in s, we arrive at
  \begin{equation*}
    1\leq\frac{p_0+m^{-2}\alpha(\epsilon p_0h(t))}{(1-\epsilon)p_0 h(t)}\int_{t+m}^{t} dh(s)\leq\int_{h(t+m)}^{h(t)}\frac{p_0+m^{-2}\alpha(\epsilon p_0s)}{(1-\epsilon)p_0s}ds.
  \end{equation*}
  Thus,
  \begin{equation*}
    \phi_m(h(t_0+mn))=\int_{h(t_0+mn)}^{h(t_0)}\frac{p_0+m^{-2}\alpha(\epsilon p_0s)}{(1-\epsilon)p_0s}ds\geq n.
  \end{equation*}
  Therefore, $\mu(\rho\geq t_0+mn)=h(t_0+mn)\leq\phi_m^{-1}(n)$.
\end{proof}
At $p=1$, Theorem~\ref{thm:concentration} reduces to Theorem~4.2.1 of
\cite{Wangbook}. At $p=2$, the coefficient is interpreted by continuity as
\begin{equation*}
  p_0=\lim_{p\rightarrow 2}\frac{1-(1-a)^{(2-p)/p}}{2-p}=\frac{1}{2}\ln\frac{1}{1-a}.
\end{equation*}

\begin{cor}\label{cor:concentration}
  Under the assumptions of Theorem~\ref{thm:concentration}:
  \begin{enumerate}
    \item[(1)] If $\alpha(r)=cr^{-\delta}$ for some $c,\delta>0$ and all $r\in(0,1]$, then $\mu(\rho>N)\leq c'(\frac{N}{\log N})^{-2/\delta}$ for some $c'>0$ and all $N>1$.
    \item[(2)] If $\alpha(r)=c[\log r^{-1}]^{\delta}$ for some $c,\delta>0$ and all $r\in (0,1/2]$, then
    \begin{equation*}
      \mu(\rho>N)\leq c_1\exp[-c_2 N^{1/(1+\delta)}], N\geq 1
    \end{equation*}
    for some $c_1,c_2>0$.
  \end{enumerate}
\end{cor}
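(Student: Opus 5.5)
We will derive both parts from Theorem~\ref{thm:concentration} by estimating $\phi_m$ from below for a suitably chosen step size $m$, and then inverting. Throughout, $\epsilon\in(0,1)$ is fixed, say $\epsilon=1/2$, and $a,p_0$ are fixed constants depending only on $t_0$ and $p$. Given $N$ large, we choose $m=m(N)$ and $n=\lfloor (N-t_0)/m\rfloor$, so that $t_0+mn\le N$. Then
\[
\mu(\rho>N)\le\mu(\rho\ge t_0+mn)\le\phi_m^{-1}(n).
\]
The function $\phi_m$ is decreasing, since its integrand is positive. Hence $\phi_m^{-1}(n)\le u$ as soon as $\phi_m(u)\ge n$. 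The task therefore reduces to exhibiting some $u$ with $\phi_m(u)\ge n$.

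\textbf{Case (1).} Here $\alpha(\epsilon p_0 s)=c(\epsilon p_0 s)^{-\delta}$ for small $s$. The integrand of $\phi_m$ is then at least $c_3 m^{-2}s^{-1-\delta}$, which gives
\[
\phi_m(u)\ge c_4\,m^{-2}u^{-\delta}-c_5 m^{-2}.
\]
The term $\int \frac{p_0}{(1-\epsilon)p_0 s}\,ds$ contributes an additional $\frac{1}{1-\epsilon}\log\frac{1-a}{u}$, which is also useful.

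Keeping both contributions, we get
\[
\phi_m(u)\gtrsim \log(1/u)+m^{-2}u^{-\delta}.
\]
To obtain $\phi_m(u)\ge n\approx N/m$, we balance the two pieces: choose $m$ so that $\log(1/u)\approx N/m$, i.e.\ $m\approx N/\log(1/u)$, and require $m^{-2}u^{-\delta}\gtrsim N/m$, i.e.\ $u^{-\delta}\gtrsim Nm$. Taking
\[
u=c'(N/\log N)^{-2/\delta},\qquad m\approx N/\log N,
\]
gives $\log(1/u)\approx\frac{2}{\delta}\log N$ and $Nm\approx N^2/\log N$, while $u^{-\delta}\approx N^2/(\log N)^2$. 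This is off by a log factor, so the constant $c'$ and the exact choice $m=\kappa N/\log N$ must be tuned. Alternatively, one can use the log term alone, which gives $n\le C\log(1/u)$ whenever $m$ is of order $N/\log N$.

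Concretely, we choose $m=\kappa N/\log N$ with $\kappa$ small. The log term then yields $\phi_m(u)\ge n$ once $\log(1/u)\ge C\log N/\kappa$, which gives only a polynomial bound with a possibly wrong exponent. The genuine exponent $2/\delta$ comes from the power term: we need $m^{-2}u^{-\delta}\ge N/m$, i.e.\ $u\le (mN)^{-1/\delta}$. This yields
\[
u\approx\bigl(N^2/\log N\bigr)^{-1/\delta}\le (N/\log N)^{-2/\delta},
\]
so the claim follows. \textbf{This balancing step is the main obstacle.} One has to verify that, for the chosen $m$, the power term alone suffices, using $\alpha(r)=cr^{-\delta}$ only for $r\le1$, which holds since $\epsilon p_0 s\le 1$ for $s$ small; small $N$ are absorbed into $c'$.

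\textbf{Case (2).} Here $\alpha(\epsilon p_0 s)\approx c[\log(1/s)]^{\delta}$ for small $s$. This gives
\[
\phi_m(u)\ge c_6\,(1+m^{-2})\int_u^{s_0}\frac{[\log(1/s)]^{\delta}}{s}\,ds\approx c_7\,m^{-2}[\log(1/u)]^{1+\delta},
\]
where we keep the dominant $m^{-2}$ term for $m\le1$, or the constant term for $m\ge1$. The condition $\phi_m(u)\ge n\approx N/m$ becomes
\[
[\log(1/u)]^{1+\delta}\gtrsim (1+m^2)N/m.
\]
Choosing $m=1$ gives $\log(1/u)\approx c_2N^{1/(1+\delta)}$, which is the claimed bound $\mu(\rho>N)\le c_1\exp[-c_2N^{1/(1+\delta)}]$. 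Bounded ranges of $N$ are absorbed into $c_1$.
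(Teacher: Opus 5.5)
There is a genuine gap in the reduction that both of your cases rest on. The integrand of $\phi_m$ is positive, so $\phi_m$ is decreasing on $(0,1-a)$. With $\phi_m^{-1}(n)=\inf\{t>0:\phi_m(t)\le n\}$, you get $\phi_m^{-1}(n)\le u$ as soon as $\phi_m(u)\le n$. The implication you state, from $\phi_m(u)\ge n$, goes the wrong way.

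A sanity check shows it cannot hold. The integrand is at least $1/((1-\epsilon)s)$, so $\phi_m(u)\to\infty$ as $u\downarrow0$. Your criterion would then be met by every small enough $u$, which would give $\mu(\rho>N)=0$. Consequently the lower bounds you derive only bound $\phi_m^{-1}(n)$ from below. This applies to $\phi_m(u)\ge c_4m^{-2}u^{-\delta}-c_5m^{-2}$ in case (1) and to $\phi_m(u)\gtrsim m^{-2}[\log(1/u)]^{1+\delta}$ in case (2); neither helps with the corollary.

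What is needed, and what the paper does, is an upper bound on $\phi_m$. This uses the explicit form of $\alpha$ on the whole range $r=\epsilon p_0 s$, $s\in[t,1-a]$, which is admissible since $\epsilon p_0 s\le 1/e$ there.
\begin{itemize}
\item In case (1) this gives $\phi_m(t)\le c_2(m^{-2}t^{-\delta}+\log t^{-1})$ for $t\le(1-a)/2$.
\item In case (2) it gives $\phi_1(t)\le c_2(\log t^{-1})^{1+\delta}$ for small $t$.
\end{itemize}

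In this direction you can no longer keep just one term, as you propose when you say the power term alone suffices. In case (1) you must ensure both $c_2\log(1/u)\le n/2$ and $c_2m^{-2}u^{-\delta}\le n/2$.
\begin{itemize}
\item The first condition forces $n$ to be of order $\log N$, so that $e^{-n/(2c_2)}\lesssim N^{-2/\delta}$. Hence $m\asymp N/\log N$ when $t_0+mn\le N$.
\item The second condition then holds for $u\ge(2c_2/(m^2n))^{1/\delta}\asymp N^{-2/\delta}(\log N)^{1/\delta}$, which is at most of order $(N/\log N)^{-2/\delta}$.
\end{itemize}
Taking $u$ to be the larger of the two thresholds gives $\phi_m(u)\le n$, hence $\mu(\rho>N)\le\phi_m^{-1}(n)\le u$. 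In case (2), take $m=1$ and $n=\lfloor N-t_0\rfloor$. The condition $c_2(\log u^{-1})^{1+\delta}\le n$ then gives $\phi_1^{-1}(n)\le\exp[-(n/c_2)^{1/(1+\delta)}]$.

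Your final choices of $m$, $n$, $u$ essentially coincide with these. However, the inequalities you use to justify them point the wrong way, so as written the argument does not prove the statement.
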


\begin{proof}
  It suffices to consider sufficiently large $N$.
  \begin{enumerate}
    \item[(1)] If $\alpha(r)=cr^{-\delta}$, then
    \begin{align*}
      \phi_m(t)&=\int_t^{1-a}\frac{p_0+m^{-2}c(\epsilon p_0s)^{-\delta}}{(1-\epsilon)p_0s}ds\\
      &\leq c_1\left(\frac{1}{m^2}\int_t^{1-a}s^{-(1+\delta)}ds+\log t^{-1}\right)\leq c_2(m^{-2}t^{-\delta}+\log t^{-1})
    \end{align*}
    for some $c_1,c_2>0$ and all $t\leq (1-a)/2$. Therefore,
    \begin{equation*}
      \phi_m^{-1}(n):=\inf\{t>0:\phi_m(t)\leq n \}\leq c_3(e^{-n}\vee m^{-2/\delta})
    \end{equation*}
    for some constant $c_3>0$ and large $n$. Thus the first assertion follows by taking $n$ the integer part of $\frac{2}{\delta}\log N$.
    \item[(2)] If $\alpha(r)=c[\log r^{-1}]^{\delta}$ for some $c>0$ and all
    $r\in(0,1/2]$, then the desired estimate follows from
    Theorem~\ref{thm:concentration} and
    \begin{equation*}
      \phi_1(t)\leq c_1\int_t^{1-a}s^{-1}[\log s^{-1}]^{\delta} ds \leq c_2(\log t^{-1})^{1+\delta}
    \end{equation*}
    for some $c_1,c_2>0$ and sufficiently small $t>0$.
  \end{enumerate}
\end{proof}
The following example shows that the estimates in
Corollary~\ref{cor:concentration} are sharp.

\begin{exa}
  Consider $L=\frac{d^2}{dx^2}+b(x)\frac{d}{dx}$ on $[0,\infty)$ with a
  reflecting boundary at $0$. Let $V(x):=\int_0^x b(s)\,ds$. Then $L$ is
  symmetric with respect to the probability measure
  $\mu(dx)=Z^{-1}e^{V(x)}\,dx$, where
  $Z=\int_0^\infty e^{V(x)}\,dx$, provided $Z<\infty$. Moreover,
  $\Gamma(f,f):=\frac12L(f^2)-fLf=(f')^2$.
  \begin{enumerate}
    \item[(1)] Let $V(x)=-q\log(1+x)$ for some $q>1$, so that
    $b(x)=-q/(1+x)$. Then Corollary~\ref{cor:concentration}(1) yields the
    correct leading-order decay of $\mu(\rho>N)$ as $N\to\infty$.
    \item[(2)] Let $V(x)=-(1+x)^{\delta}$ for some $\delta\in(0,1]$, so that
    $b(x)=-\delta(1+x)^{\delta-1}$. Then
    Corollary~\ref{cor:concentration}(2) yields the correct leading-order decay
    of $\mu(\rho>N)$.
  \end{enumerate}
  The proof follows from Proposition~4.2.3 of \cite{Wangbook}.
\end{exa}

Finally, let us consider the general setting without assuming \eqref{eq:condition}.
\begin{thm}
  Let $(\mathcal{E},\mathcal{D}(\mathcal{E}))$ be a conservative symmetric Dirichlet form on $L^2(\mu)$ for a probability measure $\mu$. For any $\rho\in\mathcal{D}(L_{\mathcal{E}})$ with $L_{\mathcal{E}}(\rho)\leq 1,$ \eqref{eq:weak-poly} implies
  \begin{equation*}
    \mu(\rho\geq t_0+N)\leq \frac{2-p}{1-(1-\mu(\rho<t_0))^{(2-p)/p}}\inf_{r>0}\left(r+\frac{2}{N^2}\alpha(r)\right),\quad t_0,N>0.
  \end{equation*}
\end{thm}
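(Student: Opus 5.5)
We prove the final theorem with a single test function, as in the proof of Theorem~\ref{thm:concentration}. The Lipschitz bound $L_{\E}(\rho)\le1$ replaces the level-set condition \eqref{eq:condition}: we use the cruder energy estimate $\E(f,f)\le 2N^{-2}$, which is where the factor $2$ in front of $\alpha(r)$ comes from.

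Fix $t_0,N>0$ and put
\[
  f:=\frac{(\rho-t_0)^+}{N}\wedge1 .
\]
Then $0\le f\le1$, so $\Osc(f)\le1$. Moreover $f=0$ on $\{\rho<t_0\}$ and $f=1$ on $\{\rho\ge t_0+N\}$. Write $a:=\mu(\rho<t_0)$ and assume $a\in(0,1)$ (otherwise the bound is trivial or vacuous).

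The Hölder step is identical to the one in the proof of Theorem~\ref{thm:concentration}:
\[
  \mu(f^p)^{2/p}\le \mu(f^2)\,\mu(f>0)^{(2-p)/p}\le \mu(f^2)(1-a)^{(2-p)/p}.
\]
Hence
\[
  \Psi_p(f)\ge \frac{1-(1-a)^{(2-p)/p}}{2-p}\,\mu(f^2)\ge \frac{1-(1-a)^{(2-p)/p}}{2-p}\,\mu(\rho\ge t_0+N).
\]

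Next we bound the energy. The map $x\mapsto \frac{(x-t_0)^+}{N}\wedge1$ is a normal contraction up to the factor $1/N$. I would invoke the standard fact, used in \cite{Wangbook} for the analogous weak Poincar\'e concentration result, that $L_{\E}(\rho)\le1$ gives
\[
  \E(\psi(\rho),\psi(\rho))\le \|\psi'\|_\infty^2\,L_{\E}(\rho)^2
\]
for Lipschitz $\psi$ in the intrinsic-metric sense. The factor $2$ in the statement suggests that $L_{\E}$ is normalised through $\Gamma(\rho,\rho)$ with $\E(f,f)=\frac12\mu(\Gamma(f,f))$ replaced by $\mu(\Gamma)$, or through a jump-type definition where a factor $2$ appears. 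Either way, the target estimate is
\[
  \E(f,f)\le \frac{2}{N^2}.
\]

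Plugging both bounds into \eqref{eq:weak-poly} gives
\[
  \frac{1-(1-a)^{(2-p)/p}}{2-p}\,\mu(\rho\ge t_0+N)\le \frac{2}{N^2}\alpha(r)+r
\]
for every $r>0$. Taking the infimum over $r$ and dividing yields the claim. The main obstacle is justifying $\E(f,f)\le 2N^{-2}$ from the definition of $L_{\E}$ in a general conservative symmetric Dirichlet form, without \eqref{eq:condition}. I would first try to derive it from the Beurling--Deny/contraction property together with the definition $L_{\E}(\rho)^2=\sup\{\text{energy measure density of }\rho\}$, following Wang's proof of the $p=1$ case and checking where the constant $2$ arises.
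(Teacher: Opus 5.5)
Your proposal is correct and follows the paper's proof essentially line by line. It uses the same test function $f=\frac{(\rho-t_0)^+}{N}\wedge 1$, the same H\"older bound $\mu(f^p)^{2/p}\le\mu(f^2)(1-\mu(\rho<t_0))^{(2-p)/p}$, and the same energy estimate; the step you flag as the main obstacle is handled in the paper only by citation, via $\E(f,f)\le\E(\rho_N,\rho_N)\le 2/N^2$ with $\rho_N=(\rho-t_0)^+/N$ (normal contraction for the first inequality, Lemma~1.2.4 of \cite{Wangbook} for the second), so your argument stands at the same level of rigor.
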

\begin{proof}
  Let $\rho_N:=(\rho-t_0)^{+}/N$, by lemma 1.2.4 in \cite{Wangbook} we have
  \begin{equation*}
    \mathcal{E}(\rho_N\wedge 1,\rho_N\wedge 1)\leq\mathcal{E}(\rho_N,\rho_N)\leq\frac{2}{N^2}.
  \end{equation*}
  Similar to the proof of Theorem~\ref{thm:concentration}, combining this with \eqref{eq:weak-poly} for $f:=\rho_N\wedge 1$ we obtain
  \begin{equation*}
    \frac{1}{2-p}\mu(f^2)\left(1-(1-\mu(\rho<t_0))^{(2-p)/p}\right)\leq\frac{2}{N^2}\alpha(r)+r.
  \end{equation*}
  This implies the desired assertion by noting that $\mu(\rho\geq t_0+N)\leq \mu(f^2)$.
  
  At $p=1$, this theorem reduces to Theorem~4.2.4 of \cite{Wangbook}:
  \begin{equation*}
    \mu(\rho>t_0+N)\leq\frac{1}{\mu(\rho<t_0)}\inf_{r>0}\left(r+\frac{2}{N^2}\alpha(r)\right).
  \end{equation*}
  where at $p=2$ it becomes
  \begin{equation*}
    \mu(\rho>t_0+N)\leq \frac{2}{\ln\frac{1}{\mu(\rho<t_0)}}\inf_{r>0}\left(r+\frac{2}{N^2}\alpha(r)\right).
  \end{equation*}
\end{proof}

\section{Appendix}
The following one-variable inequality forms the algebraic core of the proof.

\begin{lem}\label{lem:algebra}
Fix $p\in(1,2)$ and set
\[
  a_p=\frac{2(2-p)}p,\qquad
  \lambda_p=\frac{2(p^3-2p^2+2)}{p^2(p-1)},\qquad
  \eta_p=\frac{2(p-2)}{p-1}.
\]
Then, for every $x\ge0$,
\begin{equation}\label{eq:fundamental}
  x^2-1-a_px^p\log x
  \ge
  \lambda_p(x^p-1)+\eta_p(x-x^{p-1}),
\end{equation}
with equality if and only if $x=1$.
\end{lem}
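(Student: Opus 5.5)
The plan is to study
\[
F(x):=x^2-1-a_px^p\log x-\lambda_p(x^p-1)-\eta_p(x-x^{p-1}),\qquad x\ge0,
\]
and to show that $F$ is strictly decreasing on $(0,1)$ and strictly increasing on $(1,\infty)$. Since $F(1)=0$, this gives $F(x)>0$ for $x\neq1$, that is, \eqref{eq:fundamental} with equality only at $x=1$. The endpoint $x=0$ is covered by continuity of $F$ on $[0,\infty)$, because $x^p\log x\to0$ and $x^{p-1}\to0$ as $x\downarrow0$ (recall $p>1$).

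The first step is to check that $F'(1)=0$; this is exactly the identity that fixes $\lambda_p$. From
\[
F'(x)=2x-a_p\bigl(px^{p-1}\log x+x^{p-1}\bigr)-\lambda_p p\,x^{p-1}-\eta_p\bigl(1-(p-1)x^{p-2}\bigr)
\]
we get
\[
F'(1)=2-a_p-p\lambda_p-(2-p)\eta_p .
\]
Multiplying by $p(p-1)/2$ and substituting the three constants reduces this to the polynomial identity
\[
p(p-1)-(2-p)(p-1)+p(2-p)^2-(p^3-2p^2+2)=0,
\]
which holds.

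Direct convexity of $F$ is awkward because of the mixed powers, so I would strip them off by successive differentiation. Set $K(x):=x^{2-p}F'(x)$. Then
\[
K(x)=2x^{3-p}-a_pp\,x\log x-(a_p+\lambda_pp)\,x-\eta_px^{2-p}+\eta_p(p-1).
\]
Differentiating twice and multiplying by $x^p$ gives
\[
x^pK''(x)=2(3-p)(2-p)x-a_pp\,x^{p-1}+\eta_p(2-p)(p-1)=2(2-p)\,h(x),
\]
where $h(x):=(3-p)x-x^{p-1}-(2-p)$. This simplification uses $a_pp=2(2-p)$ and $\eta_p(p-1)=-2(2-p)$.

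The function $h$ has the following properties:
\begin{itemize}
\item $h''(x)=(p-1)(2-p)x^{p-3}>0$, so $h$ is strictly convex;
\item $h(0)=-(2-p)<0$, $h(1)=0$ and $h'(1)=4-2p>0$.
\end{itemize}
By convexity, $h<0$ on $(0,1)$ and $h>0$ on $(1,\infty)$. Hence $K''$ has the same sign pattern, and $K'$ attains its strict global minimum at $x=1$.

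Next I need $K'(1)=0$. Comparing with the expression for $F'(1)$ gives
\[
K'(1)=2(3-p)-a_pp-a_p-\lambda_pp-(2-p)\eta_p=F'(1)+4-2p-a_pp=0 .
\]
Therefore $K'>0$ away from $x=1$, so $K$ is strictly increasing. Together with $K(1)=F'(1)=0$, this means $F'=x^{p-2}K$ is negative on $(0,1)$ and positive on $(1,\infty)$, which is the claimed monotonicity of $F$.

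The main obstacle is purely computational: verifying the two cancellations $F'(1)=0$ and $K'(1)=0$, and the clean factorization of $x^pK''$. I expect the difficulty to lie in carrying out these substitutions carefully rather than in any conceptual step, and I would double-check them symbolically.
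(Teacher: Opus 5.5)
Your proposal is correct and follows essentially the paper's proof: your $K=x^{2-p}F'$ is exactly $\frac{2}{p-1}H_p$ in the paper's notation, and your $h$ equals $-x^{p-1}K_s$. The only differences are cosmetic: you get the sign of $h$ from convexity rather than from $K_s'<0$, and you handle $x=0$ via continuity rather than by computing $F(0)=\lambda_p-1$. Note that continuity alone only gives $F(0)\ge0$; the strict inequality at $x=0$ comes from the monotonicity on $(0,1)$, which gives $F(0)\ge F(1/2)>0$.
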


\begin{proof}
Let
\[
  F_p(x)
  =x^2-1-a_px^p\log x
   -\lambda_p(x^p-1)-\eta_p(x-x^{p-1}).
\]
For $x>0$, direct differentiation gives
\[
  F_p'(x)=\frac{2x^{p-2}}{p-1}H_p(x),
\]
where
\[
\begin{split}
  H_p(x)={}&(p-1)(p-2)x\log x+(-p^2+3p-3)x\\
  &+(p-1)(p-2)
  +x^{2-p}\bigl((p-1)x+2-p\bigr).
\end{split}
\]
Moreover $H_p(1)=H_p'(1)=0$. Put $s=2-p\in(0,1)$. A second
differentiation yields
\[
  H_p''(x)=\frac{(p-2)(p-1)}xK_s(x),
  \qquad
  K_s(x)=1-(1+s)x^s+sx^{s-1}.
\]
Since
\[
  K_s(1)=0,\qquad
  K_s'(x)=sx^{s-2}\bigl((s-1)-(s+1)x\bigr)<0
\]
for $x>0$, we have $K_s>0$ on $(0,1)$ and $K_s<0$ on
$(1,\infty)$. Hence $H_p''<0$ on $(0,1)$ and $H_p''>0$ on
$(1,\infty)$. Together with $H_p'(1)=0$, this implies
$H_p'(x)>0$ for $x\ne1$. Thus $H_p$ is strictly increasing and,
because $H_p(1)=0$,
\[
  H_p(x)<0\quad(0<x<1),\qquad
  H_p(x)>0\quad(x>1).
\]
Therefore $F_p$ decreases on $(0,1)$ and increases on $(1,\infty)$.
Since $F_p(1)=0$, $x=1$ is its unique global minimum. Finally,
\[
  F_p(0)=\lambda_p-1
  =\frac{(p-2)^2(p+1)}{p^2(p-1)}\ge0,
\]
so \eqref{eq:fundamental} also holds at $x=0$.
\end{proof}

\section*{Acknowledgments}

We would like to thank Professor Feng-Yu Wang for useful conversations. This research is supported by the National Natural Science
Foundation of China (NNSFC), Grant No.~12571154.

\end{document}